\newtheorem{theorem}{Theorem}[section]
\newtheorem{lemma}[theorem]{Lemma}
\theoremstyle{definition}
\newtheorem{definition}[theorem]{Definition}
\newtheorem{question}[theorem]{Question}
\newtheorem{proposition}[theorem]{Proposition}
\newtheorem{remark}[theorem]{Remark}
\numberwithin{equation}{section}
\newcommand{\pretzvar}{k}
\newcommand{\cyclicgroup}[1]{\mathbb{Z}/{#1}}
\newcommand{\ex}{\begin{tikzpicture}[scale=0.13,baseline=-0.11cm,thick]\draw (-1,-1) -- (1,1);\draw (1,-1) -- (-1,1);\end{tikzpicture}}
\begin{document}

\title{Links all of whose cyclic branched covers are L-spaces}

\bibliographystyle{alpha}

\author{Ahmad Issa}
\address{Department of Mathematics, University of British Columbia, Canada}
\email{aissa@math.ubc.ca}

\author{Hannah Turner}

\address{Department of Mathematics, University of Texas at Austin}
\email{hannahturner@math.utexas.edu}

\begin{abstract} We show that for the pretzel knots $K_\pretzvar=P(3,-3,-2\pretzvar-1)$, the $n$-fold cyclic branched covers are L-spaces for all $n\geq 1$. In addition, we show that the knots $K_\pretzvar$ with $k\geq 1$ are quasipositive and slice, answering a question of Boileau-Boyer-Gordon. We also extend results of Teragaito giving examples of two-bridge knots with all L-space cyclic branched covers to a family of two-bridge links.

\end{abstract}

\maketitle

\section{Introduction}

Given a knot $K$ in $S^3$, it is an interesting problem to determine the set $$\mathcal{L}_{br}(K) = \{n \ge 1 : \mbox{the }n\mbox{-fold cyclic branched cover }\Sigma_n(K)\mbox{ is an L-space}\},$$ where $\Sigma_n(K)$ denotes the $n$-fold cyclic branched cover of $S^3$ over $K$, and an L-space is a rational homology $3$-sphere whose Heegaard Floer homology is as simple as possible. For example, when $K$ is the trefoil, $\mathcal{L}_{br}(K)=\{1,2,\ldots, 5\}$ (see \cite{GL14}) and when $K$ is the figure-eight knot, $\mathcal{L}_{br}(K)=\{1,2,\ldots\};$ see e.g.\ \cite{Pe09}. Evidence suggests the set  $ \mathcal{L}_{br}(K)$ always takes one of the two forms $\mathcal{L}_{br}(K) = \{1,2,\ldots\}$ or $\mathcal{L}_{br}(K) = \{1,\ldots,N\}$ for some $N$, and Boileau-Boyer-Gordon asked whether this holds in general. Some results are known, for example non-split alternating links have L-space double branched covers, and this is true more generally for Khovanov homology thin links \cite{OS-DBcover}.  It is known that the $n$-fold cyclic branched cover of $K$ is not an L-space for $n$ sufficiently large, provided that $K$ is a non-slice quasipositive knot \cite{BBG-QPL-space}, or $K$ is fibered with non-zero fractional Dehn twist coefficient \cite{Rob-Taut, KR-Approx} see also \cite{HM-Frac}. This class includes, for example, all L-space knots. 

One motivation for the study of $\mathcal{L}_{br}(K)$ comes from the L-space conjecture 
which, for an irreducible 3-manifold $M$, asserts that $M$ is an L-space if and only if $M$ has a non-left orderable fundamental group \cite{BGW13}.  Cyclic branched covers provide a natural class for which to study the L-space conjecture, and various results are known for left orderability of fundamental groups of branched covers; see for example \cite{BGW13, GL14, Hu15, Go17}.

In contrast to the case $\mathcal{L}_{br}(K) = \{1,\ldots,N\}$, knots for which $\mathcal{L}_{br}(K) = \{1,2,\ldots\}$, that is, $\Sigma_n(K)$ is an L-space for all $n \ge 1$, are less well understood. In fact, prior to our work, the only known examples was a family of $2$-bridge knots \cite{Pe09, Te14}. We prove the following theorem, giving a family of pretzel knot examples; see Figure \ref{pretzelandsymmetry} for our notation of pretzel knots.

\begin{theorem}
\label{pretzelsL-spaces}
The $n$-fold cyclic branched cover $\Sigma_n(P(3,-3,-2k-1))$ is an L-space for all integers $k$ and $n \ge 1$.
\end{theorem}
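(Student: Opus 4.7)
The plan is to exploit the Montesinos structure of $K_\pretzvar = P(3,-3,-2\pretzvar-1)$ together with a strong inversion that exchanges the cancelling tangles of slopes $\tfrac{1}{3}$ and $-\tfrac{1}{3}$. First I would fix an equivariant position: choose a strong involution $\sigma$ on $(S^3,K_\pretzvar)$ whose axis passes through those two tangles, and describe the quotient $(S^3,K_\pretzvar)/\sigma$ explicitly as a rational tangle in $B^3$. Since $\sigma$ commutes with the $\mathbb{Z}/n$ action on $\Sigma_n(K_\pretzvar)$, it lifts to an involution $\tilde\sigma$ of $\Sigma_n(K_\pretzvar)$, exhibiting that manifold as a double cover of some $3$-manifold $Y_n$ branched over a link $L_n$.

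Next I would analyze the pair $(Y_n,L_n)$. The identity $\tfrac{1}{3}+(-\tfrac{1}{3})=0$ should force a substantial simplification in the quotient; the natural guess is that $Y_n$ is $S^3$ (or a lens space) and $L_n$ is a two-bridge link whose parameters depend only on $n(2\pretzvar+1)$. In that situation the theorem would follow from the paper's extension of Teragaito's results to two-bridge links, stated earlier in the introduction. Should the direct quotient fail to be literally two-bridge, the fallback is to extract a plumbing description of $\Sigma_n(K_\pretzvar)$ as a tree of lens-space pieces, negative definite, and verify the standard Ozsv\'ath--Szab\'o criterion that such a plumbing with at most one bad vertex is an L-space.

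The main obstacle will be carrying out the equivariant surgery calculus cleanly enough to identify $L_n$ (or the plumbing graph) uniformly in $n$ and $\pretzvar$; this is where all the genuine technical work lies. The base cases are reassuring: $\Sigma_1(K_\pretzvar)=S^3$, and $\Sigma_2(K_\pretzvar)$ is a small Seifert fibered space with three exceptional fibers of multiplicities $3$, $3$, $|2\pretzvar+1|$, which is an L-space by the Lisca--Stipsicz classification. The sign of $\pretzvar$ can be absorbed by passing to mirrors, leaving a one-parameter family to handle, and threading the quotient identification into an induction on $n$ should then deliver the conclusion.
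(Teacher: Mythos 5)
There is a genuine gap, in two places. First, the symmetry you choose does not do what you need. For the covers to commute and give $\Sigma_n(K_\pretzvar)\cong\Sigma_2(L_n)$ with $L_n$ a link in $S^3$ that you can actually draw, you want the period-$2$ rotation about the horizontal axis passing through all three odd twist boxes: its axis is \emph{disjoint} from the knot, it commutes with the meridional $\cyclicgroup{n}$ action, and the quotient knot $\overline{K_\pretzvar}$ is unknotted, so $\Sigma_{n,1}(\overline{K_\pretzvar}\cup\overline{a})\cong S^3$ and $L_n$ is just the preimage of $\overline{a}$. A strong inversion (axis meeting $K_\pretzvar$ in two points) does \emph{not} commute with the $\cyclicgroup{n}$ deck action --- it conjugates it to its inverse --- so although it lifts, the intermediate quotient $Y_n=\Sigma_n(K_\pretzvar)/\tilde\sigma$ sits inside a dihedral tower as an irregular degree-$n$ cover of $(S^3,\overline{K}\cup\overline{a})$, and identifying $Y_n$ as $S^3$ with a tractable branch link is exactly the kind of thing that fails to be automatic. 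Second, and more seriously, your ``natural guess'' that the branch link is two-bridge (or alternating) is false, and it is the whole point of the problem: with the correct symmetry the branch link is $L_{n,\pretzvar}=L(-\pretzvar,\ldots,-\pretzvar)$, the closure of $n$ stacked copies of a $3$-string tangle, and already $L_{2,1}=P(3,-3,-3)$, which is not two-bridge and not even quasi-alternating. So the planned reduction to the Teragaito-type two-bridge theorem collapses at $n=2$. The actual content of the proof is to show that the links $L(k_1,\ldots,k_n)$ are two-fold quasi-alternating in the sense of Scaduto--Stoffregen, by induction on $\sum|k_i|$, resolving a crossing in a twist box so that one resolution is a two-component unlink carrying odd markings (whence $\det(L_1)=0$ and the determinant condition is free); nothing in your proposal supplies this step or any substitute for it.

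Your fallback and base cases do not rescue the argument. The plumbing route is unavailable: these pretzel knots are hyperbolic, so by the orbifold theorem $\Sigma_n(K_\pretzvar)$ is hyperbolic for large $n$ and admits no graph-manifold or plumbing description, negative definite or otherwise. The observation that $\Sigma_2(K_\pretzvar)$ is a Seifert fibered L-space is correct but carries no inductive weight, and ``threading the quotient identification into an induction on $n$'' is not a mechanism --- the induction that actually works is on the number of crossings in the twist boxes of the fixed branch link $L_{n,\pretzvar}$, not on $n$.
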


Boileau-Boyer-Gordon showed that if a quasipositive knot $K$ is not smoothly slice then $\Sigma_n(K)$ is not an L-space for $n$ sufficiently large, and asked the following question.

\begin{question}[\cite{BBG-QPL-space}] 
\label{BBGQ}
Does there exist a slice quasipositive knot all of whose branched cyclic covers are L-spaces?
\end{question}

Let $K_k$ denote the pretzel knot $P(3,-3,-2\pretzvar -1)$. These pretzel knots are well known to be slice; see Figure \ref{PretzelsQP} for a ribbon diagram. For $k\geq 1$, we show $K_k$ is a track knot, a notion introduced by Baader \cite{Ba-Track}.  Since track knots are quasipositive we obtain the following proposition, answering the above question in the affirmative.

\begin{proposition}\label{pretzelsqp}
The knots $K_k$ are slice and quasipositive for all $\pretzvar\geq 1$.
\end{proposition}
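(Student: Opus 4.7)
The proof naturally splits into the sliceness and quasipositivity claims, which I would treat separately. Sliceness of $K_k = P(3,-3,-2k-1)$ is classical for pretzels of the form $P(p,-p,q)$: I would exhibit a single ribbon band that simultaneously cancels the $3$-twist and $-3$-twist tangles, reducing the knot to a two-component unlink after the saddle move. This amounts to displaying the band in a diagram, checking that it pushes into $B^4$ as a ribbon disk, and verifying that the band surgery produces an unlink. The paper's Figure \ref{PretzelsQP} is intended to carry exactly this information.

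For quasipositivity, the plan is to realize $K_k$ as a \emph{track knot} in the sense of Baader \cite{Ba-Track} and then invoke Baader's theorem that track knots are quasipositive. Baader's track knots arise as boundaries of ribbon surfaces built by successive plumbings of positive Hopf bands along a planar graph, the \emph{track}. I would construct such a surface $\Sigma_k$ with $\partial \Sigma_k \cong K_k$ by starting from a disk and plumbing in a family of positive Hopf bands whose combinatorics grow linearly in $k$. I would draw the cases $k = 1, 2$ explicitly to reveal the pattern, then describe the general $\Sigma_k$ inductively: passing from $\Sigma_k$ to $\Sigma_{k+1}$ should correspond to inserting one fixed local building block of Hopf-band plumbings that accounts for the two extra negative crossings in the third tangle $-2k-1$.

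The main obstacle, and where most of the work lies, is verifying that the plumbed surface $\Sigma_k$ actually has boundary the pretzel $K_k$. Identifying the correct track structure and then producing an isotopy from $\partial \Sigma_k$ to a standard pretzel diagram is a delicate pictorial check; I would carry it out by an explicit sequence of Reidemeister moves/isotopies exhibited through figures, probably organised around the inductive building block so that the general case follows from the base case together with the local move. Once the track presentation is established and shown to satisfy Baader's combinatorial hypotheses, quasipositivity of $K_k$ for all $k \geq 1$ follows immediately from \cite{Ba-Track}, and combined with the ribbon diagram of the first step this yields the proposition.
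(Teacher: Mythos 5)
Your treatment of sliceness is fine and matches the paper: Figure \ref{PretzelsQP} exhibits exactly the ribbon band/ribbon disk you describe, and nothing more is needed for that half of the statement.

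The quasipositivity half has a genuine gap, and it is not a matter of missing details: your characterization of Baader's track knots is wrong, and the construction you propose cannot exist for these knots. Track knots in \cite{Ba-Track} are \emph{not} boundaries of surfaces obtained by plumbing positive Hopf bands along a planar graph. They are defined from a generic immersed interval in the plane by thickening it to a band, resolving the four band-boundary intersections at each double point according to a label in $\{a,b,c,d\}$ and an angle, and inserting positive full twists at marked points (Figures \ref{trackexample} and \ref{trackcrossings}); the content of the paper's proof is the explicit isotopy in Figure \ref{PretzelsQP} putting $P(3,-3,-2k-1)$ into this form. Your proposed route is obstructed in principle: a surface built by plumbing positive Hopf bands is a quasipositive Seifert surface, so by Rudolph's slice--Bennequin inequality its boundary $K$ satisfies $g_4(K)=g_3(K)$; since $K_k$ is slice and nontrivial, no such surface $\Sigma_k$ with $\partial\Sigma_k \cong K_k$ can exist. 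Equivalently, such a boundary would be fibered with monodromy a product of positive Dehn twists, hence with positive fractional Dehn twist coefficient, and then by the results cited in the introduction its $n$-fold cyclic branched covers would fail to be L-spaces for large $n$, contradicting Theorem \ref{pretzelsL-spaces}. The distinction between quasipositive and \emph{strongly} quasipositive is exactly the point of Question \ref{BBGQ}: the $K_k$ are quasipositive but cannot be strongly quasipositive, so any argument that would produce a quasipositive Seifert surface for them must fail. To repair your proof you should work with Baader's actual definition and produce the labeled immersed interval directly, as the paper does.
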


Before discussing these examples in more depth, we first look at a two bridge example. The figure-eight knot $K$ is the simplest knot for which $\Sigma_n(K)$ is an L-space for all $n \ge 1$. One way to see this is as follows. First observe that the figure-eight knot $K$ is a $2$-periodic knot, with quotient knot $Q$ and axis $A$ in the quotient as shown in Figure \ref{Figure8}. The $n$-fold cyclic branched cover $\Sigma_n(K)$ can be recovered by taking the $2$-fold branched cover over the axis $A$ and then taking the $n$-fold cyclic branched cover over the lift of $Q$. Reversing the order in which we take branched covers, $\Sigma_n(K)$ can also be obtained by taking the $n$-fold cyclic branched cover over $Q$, then the $2$-fold branched cover over the lift of $A$, which we denote $L_n$. Using the fact that there is an ambient isotopy interchanging the two components of $Q\cup A$, it is not difficult to obtain a diagram for $L_n$. The link $L_n$ is alternating and non-split, since it has a non-split alternating diagram \cite{Men-Alternating}. Thus, its double branched cover $\Sigma_2(L_n) \cong \Sigma_n(K)$ is an L-space for all $n \ge 1$ \cite{OS-DBcover}. 

\begin{figure}
\begin{tikzpicture}
	\node[anchor=south west,inner sep=0] (image) at (0,0) {\includegraphics[width=0.7\textwidth]{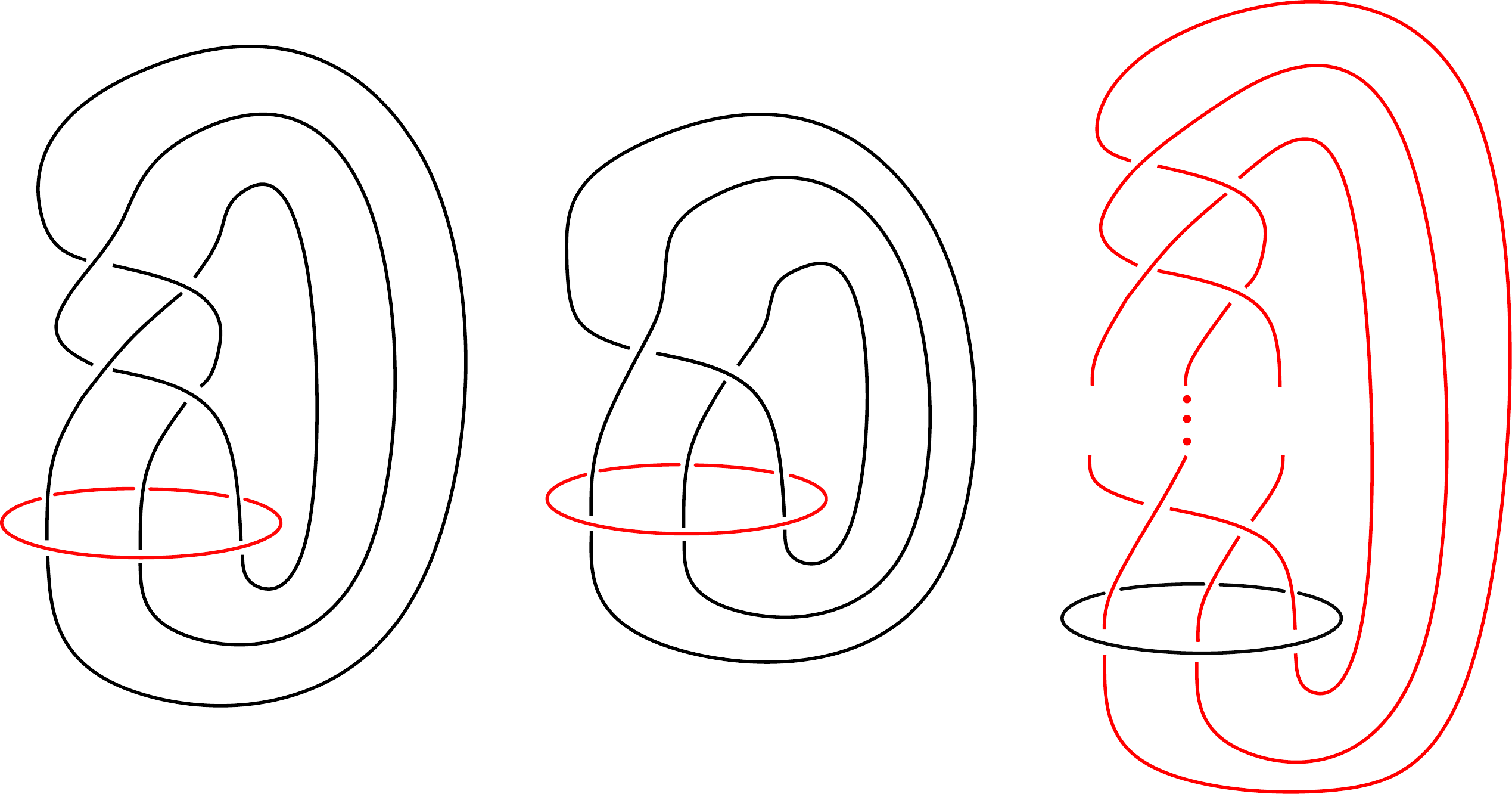}};
	\begin{scope}[x={(image.south east)},y={(image.north west)}]
        \node at (0,0.5) {$K$};
        \node at (0.35, 0.3){ $\textcolor{red}{A}$};
        \node at (0.35, 0.6){ $Q$};
        \node at (.7,.7){$\textcolor{red}{L_n}$};
    \end{scope}
\end{tikzpicture}
\caption{On the left we give a diagram for the figure-eight knot $K$ which after a planar isotopy has a rotational symmetry of order 2. Once we quotient by this symmetry we obtain the symmetric link $Q\cup A$. On the right, $L_n$ is an $n$-periodic link with quotient $A\cup Q$.}
\label{Figure8}
\end{figure}

Peters proved that $\Sigma_n(K)$ is an L-space for two-bridge knots $K$ with fraction\footnote{A two-bridge knot with fraction $\frac{p}{q}$ has double branched cover the lens space $L(p,q)$. The figure-eight knot has fraction $\frac{5}{2} = [2, 2]^{+} = 2 + \frac{1}{2}$.} $[2a, 2b]^{+} := 2a + \frac{1}{2b}$ for all $a,b \ge 1$ and $n \ge 1$  \cite{Pe09}. Teragaito \cite{Te14} generalized these examples to two bridge knots with continued fractions of the form $[2a_1, \ldots, 2a_{2k}]^{+}$, where $a_1,\ldots,a_{2k} > 0$. In order to establish that $\Sigma_n(K)$ is the double branched cover of an alternating link in these cases, both Peters and Teragaito appeal to work of Mulazzani and Vesnin \cite{MV-Takahashi} which proceeds by means of certain surgery presentations of these manifolds. We remark that their results can be obtained directly using the $2$-periodic nature of two bridge links, generalizing the figure-eight knot case above. For any two-bridge link $L$ (with a particular orientation) we have $\Sigma_n(L)\cong \Sigma_2(L_n)$ for some link $L_n$; see e.g. \cite{MV-Cyclic2-bridge}. In certain cases, this link $L_n$ is non-split and alternating. This line of argument extends Teragaito's family to two bridge links \cite{Te14} as follows. We note that the special case of two-bridge torus links with fraction $[2a]^{+} = 2a$ and anti-parallel string orientations was first shown by Peters \cite{Pe09}.

\begin{theorem}
\label{twobridgebranched}

Let $L$ be a two-bridge link with fraction of the form $$[2a_1, 2a_2, \ldots, 2a_n]^{+} := 2a_1 + \cfrac{1}{2a_2 + \cfrac{1}{\ddots \,\,  + \cfrac{1}{2a_n}}},$$ where $a_1,\ldots,a_n$ are all positive integers. In the case that $L$ is a two-component link, we orient the link as in Figure \ref{twobridgestd}. Then $\Sigma_n(L)$ is an L-space for all $n\geq 1$.
\end{theorem}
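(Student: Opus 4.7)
The plan is to imitate the figure-eight construction sketched in the preceding paragraph for an arbitrary two-bridge link $L$ of the given form. As noted there, every two-bridge link carries a $2$-periodic symmetry whose quotient is an axis-link pair $A \cup Q$ in $S^3$ related by an ambient isotopy, and reversing the order in which the two branched covers are taken yields
\begin{equation*}
  \Sigma_N(L) \;\cong\; \Sigma_2(L_N),
\end{equation*}
where $L_N \subset S^3$ denotes the preimage of $A$ under the $N$-fold cyclic branched cover of $S^3$ along $Q$ (when $L$ is two-component, the orientation specified in Figure \ref{twobridgestd} pins down which cover we use). This identity is exactly the content of \cite{MV-Cyclic2-bridge}. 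By Ozsv\'ath--Szab\'o \cite{OS-DBcover}, to prove the theorem it therefore suffices to show that $L_N$ admits a non-split alternating diagram for every $N \ge 1$.

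The heart of the argument is an explicit diagrammatic construction of $L_N$. Beginning from the standard plat picture of $A \cup Q$ determined by the continued fraction $[2a_1, \ldots, 2a_n]^{+}$, one cuts along a meridional disc for $Q$ to extract a fundamental domain for the cyclic cover, and then obtains $L_N$ by gluing $N$ copies of this fundamental domain cyclically around the image of $Q$. Each twist region of $2a_i$ crossings in the fundamental domain is replicated $N$ times, producing a long twist block of $2Na_i$ crossings in $L_N$, while the intermediate tangle structure between twist blocks is preserved. The two hypotheses on the $a_i$ now play complementary roles: positivity prevents any twist block from collapsing into reducible crossings, and the \emph{evenness} of each $2a_i$ ensures that the over/under pattern of one twist block matches that of the next across the cyclic gluing. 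Together these combinatorial facts force a planar diagram of $L_N$ in which all crossings alternate.

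The main obstacle is the diagrammatic verification in the previous paragraph: one has to carry out the cyclic concatenation carefully, track over/under information through each twist block, and check that no crossing becomes reducible and that no stray component separates off. Once this is established, Menasco's theorem \cite{Men-Alternating} ensures that the connected alternating diagram represents a non-split link, and then $\Sigma_2(L_N) \cong \Sigma_N(L)$ is an L-space by \cite{OS-DBcover}. Since $N \ge 1$ is arbitrary, this completes the proof.
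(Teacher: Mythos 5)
Your high-level strategy is the same as the paper's (exploit the $2$-periodicity, reverse the order of the two branched covers to get $\Sigma_N(L)\cong\Sigma_2(L_N)$, show $L_N$ is non-split alternating, and apply \cite{OS-DBcover} with \cite{Men-Alternating}), but the step you yourself identify as ``the heart of the argument'' --- the explicit construction of $L_N$ and the verification that it is alternating --- is both left undone and, where you do sketch it, described incorrectly. The lift of the axis under the $N$-fold branched cover over the unknotted quotient of $L$ is obtained by cyclically concatenating $N$ copies of the quotient pattern around a solid torus (as in the proof of Proposition \ref{proplinkL_n} and Figure \ref{LinkLn}); the twist regions of the separate copies sit in disjoint fundamental domains and do \emph{not} coalesce, so $L_N$ contains $N$ separate twist regions of $a_i$ crossings for each $i$ (note: $a_i$, not $2a_i$ --- the quotient by the involution halves each twist region), rather than a single block of $2Na_i$ crossings. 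A sanity check on the figure-eight knot ($[2,2]^{+}$) shows your description cannot be right: your $L_2$ would be the two-bridge link $[4,4]^{+}$ with determinant $17$, whereas $\det(L_2)=|H_1(\Sigma_2(4_1))|=5$. Consequently your explanation of why the diagram alternates (positivity preventing ``collapse,'' evenness matching over/under data ``across the cyclic gluing'') is reasoning about the wrong diagram; in the correct picture the evenness of the $2a_i$ is used only to produce the $2$-periodic symmetric diagram in the first place, while positivity is what makes the quotient pattern of the axis an alternating tangle in the solid torus, so that stacking $N$ copies stays alternating.

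There is a second, related gap: to draw the axis as a pattern in the complement of the unknotted quotient of $L$, one needs to know that the quotient two-component link $\overline{L}\cup\overline{a}$ admits an isotopy exchanging its two components. You assert this ``as noted there'' for the figure-eight, but for a general two-bridge link it requires an argument: the paper produces an explicit diagram symmetric about the origin (Figure \ref{twobridgesymmetric}), proves via a sequence of Conway-sphere twists that it is isotopic to the standard diagram of Figure \ref{twobridgestd}, takes the quotient (Figure \ref{twobridgequotient}), and only then reads off the exchange symmetry and the alternating pattern for $\overline{a}$. Without this step you have no diagram of $A$ in the complement of $Q$ to lift, so the ``diagrammatic verification'' you defer cannot even be started. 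To repair the proof you should supply the symmetric diagram and the exchange-of-components isotopy, correct the description of $L_N$ to the cyclic concatenation of $N$ copies of the alternating quotient tangle, and then observe that such a concatenation is a connected alternating diagram.
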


More generally, whenever a link $L$ is $2$-periodic with quotient link the unknot, we can express $\Sigma_n(L)$ as the double branched cover over a link $L_n$. This is the case when $L$ is an odd pretzel knot with all tassel parameters odd integers. Thus, $\Sigma_n(P(3,-3,-2\pretzvar-1))$ is the double branched cover of a link $L_{n,k}$. However, unlike in the figure-eight knot case, the links $L_{n,k}$ are not, in general, alternating links. For example, $L_{2,1}$ is the pretzel knot $P(3,-3,-3)$ which is not even quasi-alternating \cite{MR2592726}; see also \cite{MR3825858}. We instead show that the links $L_{n,k}$ are two-fold quasi-alternating, a generalization of quasi-alternating links introduced by Scaduto and Stoffregen \cite{MR3760881}. Two-fold quasi-alternating links are $\cyclicgroup{2}$ reduced Khovanov thin \cite{MR3760881}. Thus $\Sigma_2(L_{n,k}) = \Sigma_n(P(3,-3,-2\pretzvar-1))$ is a Heegaard Floer L-space \cite{OS-DBcover} and a framed instanton L-space \cite{Sca-Instantons} with $\cyclicgroup{2}$ coefficients. In fact, we prove that the larger family of links $L(k_1,k_2,\ldots, k_n)$ in Figure \ref{LinkLnIntro} are all two-fold quasi-alternating. The link $L_{n,k}$ is given by $L(-k,-k,\ldots,-k)$, where $-k$ appears $n$ times.

\begin{theorem}
\label{LnTQA}
Let $L(k_1,k_2,\ldots, k_n)$ be the link in Figure \ref{LinkLnIntro}. Then $L$ is two-fold quasi-alternating for all integers $k_1, \ldots, k_n$.
\end{theorem}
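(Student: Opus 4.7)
The strategy is to induct on the complexity $c(L) = \sum_{i=1}^{n} \abs{k_i}$ using the inductive definition of a two-fold quasi-alternating link from Scaduto--Stoffregen. Recall that in addition to the usual quasi-alternating reduction at a single crossing (where $\det(L) = \det(L_0) + \det(L_1)$ and both smoothings are TQA), the TQA framework permits a reduction at a pair of crossings via a modified determinant additivity. The plan is to find, at each step of the induction, a crossing (or a symmetric pair of crossings) in the diagram of $L(k_1,\dots,k_n)$ whose smoothings lie within the same family, and to verify the relevant determinant identity.

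First I would handle the base case. When all $k_i\in\{-1,0,1\}$, the twist regions degenerate: either the crossing can be removed by a Reidemeister move, or two strands of the diagram are joined, merging adjacent parameters. In these small cases $L(k_1,\dots,k_n)$ can be put into an alternating (hence quasi-alternating, hence TQA) form, or checked to be one of the elementary TQA diagrams in \cite{MR3760881}. I would tabulate these cases and, in particular, confirm that whenever some $k_i=0$, the link collapses to a shorter $L(k_1,\dots,\widehat{k_i},\dots,k_n)$ to which induction applies.

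For the inductive step, choose $i$ with $\abs{k_i}$ maximal (and positive, say; the other sign is symmetric), and focus on a crossing in the $i$-th twist region. The $0$-smoothing reduces $k_i$ by $1$, producing $L(k_1,\dots,k_i-1,\dots,k_n)$, which is TQA by induction. The $1$-smoothing short-circuits the twist and, after isotopy, should yield either a link $L(k_1',\dots,k_{n-1}')$ of shorter length (by merging adjacent twist regions) or another diagram in the family with strictly smaller complexity. To check the determinant condition I would first derive a recursion for $\det L(k_1,\dots,k_n)$ in terms of continuant-like polynomials in the $k_i$, analogous to the two-bridge continued-fraction computation used in Theorem~\ref{twobridgebranched}; the desired additivity $\det(L) = \det(L_0)+\det(L_1)$ then becomes a straightforward identity among these polynomials.

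The main obstacle I anticipate is that for certain crossings, notably when the corresponding $k_i$ has small magnitude but is flanked by large parameters, the single-crossing smoothing will not satisfy the determinant equality on the nose; this is exactly the phenomenon preventing the links from being quasi-alternating (as witnessed by $P(3,-3,-3)$ already not being quasi-alternating). In these cases I would use the full strength of the two-fold reduction, exploiting the obvious involutive symmetry of the diagram to resolve a pair of crossings simultaneously and invoking Scaduto--Stoffregen's modified additivity relation. Setting up this pair consistently across all the inductive cases, and checking the combined determinant identity, will be the technical heart of the argument; the rest of the proof is bookkeeping with the recursion for $\det L(k_1,\dots,k_n)$.
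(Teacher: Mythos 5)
Your induction on $\sum_i \abs{k_i}$ and your choice of $0$-smoothing (reducing $\abs{k_i}$ by one) match the paper, but the heart of the argument is missing and the fallback you propose does not exist. The key observation in the paper is that the \emph{other} smoothing of a crossing in the $i$-th twist region replaces $T(k_i)$ by the tangle $T(\infty)$, and $T(\infty)$ is absorbing under tangle composition ($T(k)\oplus T(\infty)=T(\infty)$), so $L_1$ is always the \emph{two-component unlink} --- not, as you suggest, another link in the family with merged twist regions. This single fact resolves both of the difficulties you anticipate: the determinant condition $\det(L)=\det(L_0)+\det(L_1)$ is automatic because $\det(L_1)=0$ (so no continuant recursion for $\det L(k_1,\dots,k_n)$ is needed at all), and the two-component unlink is TQA because it splits into two odd-marked unknots. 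That last point is where the ``two-fold'' structure actually enters: the extra power of TQA over quasi-alternating is the clause admitting links that split into two odd-marked pieces, together with the marking bookkeeping. Your proposal never engages with markings, yet they are essential --- one must place a dot on each of two arcs of $D$ that land on distinct components of $L_1$, and check (using that $L$ and $L_0$ are knots, which requires a connectivity lemma you also do not prove) that this represents the trivial marking on $L$ and on $L_0$.

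The ``modified determinant additivity at a pair of crossings'' you invoke as the technical heart of your argument is not part of the Scaduto--Stoffregen definition; the inductive clause of TQA is still a single-crossing smoothing with the same determinant equality as in the quasi-alternating case. So as written, the step you flag as the main obstacle has no available tool to resolve it, while the actual resolution (the $\infty$-smoothing collapses everything to a determinant-zero split link) is absent. Your base case is also broader than necessary --- once the $1$-smoothing is handled as above, the only base case needed is $k_1=\cdots=k_n=0$, where the link admits a non-split alternating diagram after conjugating $T(0)$ to an alternating tangle.
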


\begin{figure}[!htbp]
\begin{tikzpicture}
	\node[anchor=south west,inner sep=0] (image) at (0,0) {\includegraphics[width=0.9\textwidth]{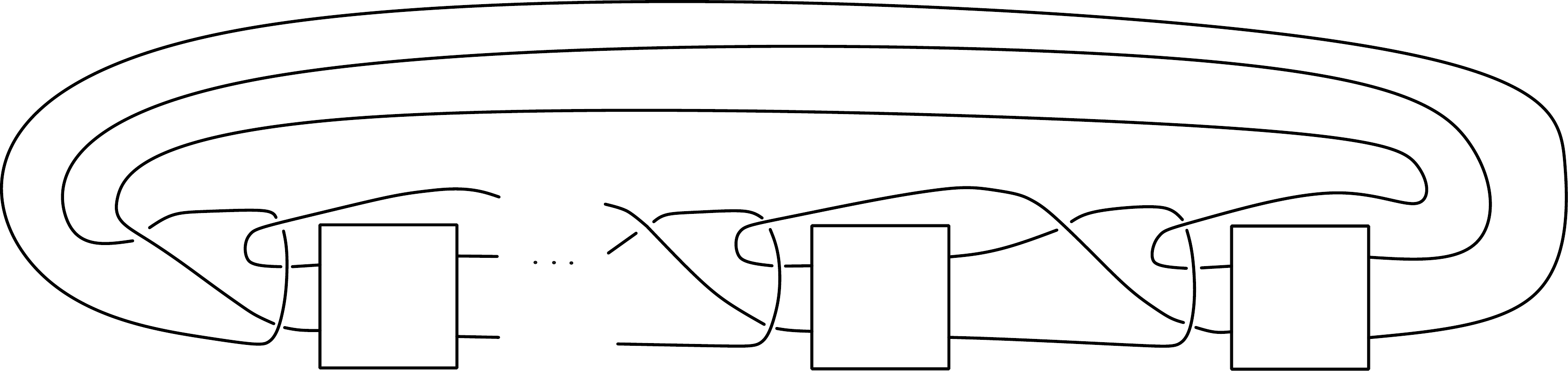}};
	\begin{scope}[x={(image.south east)},y={(image.north west)}]
        \node at (0.246,0.21) {\large $k_1$};
        \node at (0.565, 0.21){\large $k_{n-1}$};
        \node at (0.831, 0.21){\large $k_n$};
    \end{scope}
\end{tikzpicture}
\caption{The link $L(k_1,k_2,\ldots, k_n)$, where $k_1,\ldots,k_n$ are any integers. A twist box labelled $k_i$ denotes $k_i$ signed half-twists. See Figure \ref{pretzelandsymmetry} for an example of our signed twist conventions.}
\label{LinkLnIntro}
\end{figure}

At the time of this writing, the links in Theorem \ref{pretzelsL-spaces} and Theorem \ref{twobridgebranched} include all currently known examples of knots and links all of whose branched cyclic covers are L-spaces. It is rather curious that in all of these examples, the knots satisfy the following properties. It is an interesting question whether any of these properties hold more generally.

\begin{enumerate}
\item\label{alexpoly} The roots of the Alexander polynomial $\Delta_K(t)$ are all real and positive \cite{LM-Zeros, Li-IntroKnots}.
\item\label{signature} The Tristram-Levine signature function $\sigma_\omega(K)$ is identically zero. This is implied by \eqref{alexpoly}.
\item\label{dinvariant} The $d$-invariant $d(\Sigma_n(K),\mathfrak{s}) = 0$ on the specific spin$^c$ structure $\mathfrak{s}$ described in \cite{LRS-Lefschetz}, for all $n\geq 2$.
\item The knot group $\pi_1(X_K)$ is biorderable \cite[Corollary 1.11]{JJ-BO}, \cite{JJ-BOP}.
\end{enumerate}

We remark that property \eqref{dinvariant} follows from property \eqref{signature} together with work of Lin-Ruberman-Saveliev establishing a relationship between the Tristram-Levine signatures and the Heegaard Floer $d$-invariant $d(\Sigma_n(K),\mathfrak{s})$ for a certain spin$^c$ structure when $K$ is a knot such that $\Sigma_n(K)$ is an L-space for all $n\geq 2$ \cite{LRS-Lefschetz}. We remark that the oriented two component links $L$ of Theorem \ref{twobridgebranched} all satisfy $|\sigma(L)|=1$ which is as small as possible among two component links whose double branched covers are rational homology spheres.

We end the introduction with the following question.
\begin{question} Does there exist a non $2$-periodic knot $K$ for which $\Sigma_n(K)$ is an L-space for all $n\geq 1$?
\end{question}

\subsection{Organization}

We exhibit $\Sigma_n(K_k)$ as the double-branched cover of a link $L_{n,k}$ in Section \ref{PretzelCovers}. In Section \ref{TQA} we give background on two-fold quasi-alternating links. We prove Theorems \ref{pretzelsL-spaces} and \ref{LnTQA} in Section \ref{OurLinksTQA}. We prove Proposition \ref{pretzelsqp} in Section \ref{Quasipositivity}. We conclude with a discussion of the case of two-bridge links and prove Theorem \ref{twobridgebranched}.

\subsection{Acknowledgements}
Conversations with Jonathan Johnson compelled us to study this family of pretzel knots. We would like to thank Cameron Gordon and Liam Watson for helpful feedback on an earlier draft. In addition, the second author thanks Cameron Gordon for many helpful conversations as well as his advice and support. The second author is supported by an NSF graduate research fellowship under grant no. DGE-1610403.

\section{Branched Cyclic Covers of $P(3,-3,-2\pretzvar-1)$}\label{PretzelCovers}

In this section we discuss $2$-periodic symmetries for pretzel links and apply these symmetries to express $\Sigma_n(P(3,-3,-2\pretzvar -1))$ as the double-branched cover of a link $L_{n,k}$. Let the pretzel link $P(p_1,p_2, \ldots, p_n)$ be defined by replacing the boxes in Figure \ref{pretzelandsymmetry} with $|p_i|$ half-twists with sign determined by the sign of $p_i$. We remark that cyclic permutations of the parameters do not change the link isotopy class. Then $K_\pretzvar=P(3,-3,-2\pretzvar -1)$, and in fact any pretzel link $P(p_1,p_2,\ldots, p_n)$ with each $p_i$ odd, admits an involution whose axis is disjoint from the link which we will now describe. Let $a$ be the horizontal line which passes through each twist box in the standard diagram for $K_k$ through the central crossing of each twist box. Then $K_k$ is rotationally symmetric about $a$; see Figure \ref{pretzelandsymmetry}. Rotating about this axis $a$ by an angle of $\pi$ defines an involution $\iota$ which preserves the knot setwise. The fixed set of this involution in $S^3$ is the axis $a$.

\begin{figure}[h!]
\begin{tikzpicture}[scale=1.0]
	\node[anchor=south west,inner sep=0] (image) at (0,0) {\includegraphics[width=.9\textwidth]{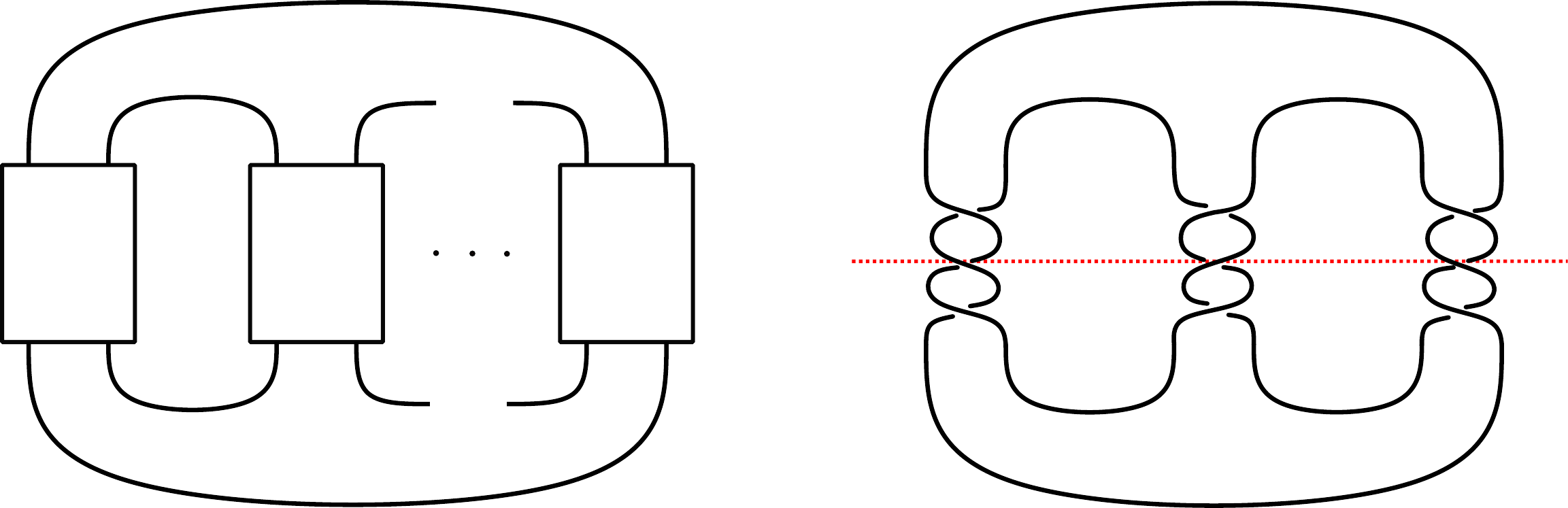} };
	\begin{scope}[x={(image.south east)},y={(image.north west)}]
	   \node at (0.045,.5) {\large $p_1$};
        \node at (.205, .5){\large $p_2$};
        \node at (.4, .5){\large $p_n$};
	\node[inner sep=0] (image) at (.98,.5) {\includegraphics[scale=.6]{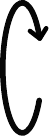} };
	\end{scope}
\end{tikzpicture}

\caption{The pretzel link $P(p_1,p_2,\ldots p_n)$ and a symmetry of $P(-3,3,-3)$.}
\label{pretzelandsymmetry}
\end{figure}

We are interested in the quotient by this involution. The quotient 3-manifold is $S^3$ and we denote the quotient knots by $\iota(S^3,K_\pretzvar,a)=(S^3,\overline{K_\pretzvar},\overline{a})$ pictured in Figure \ref{pretzelquotienttorus}. In fact, $\iota$ defines a branched covering map from $S^3$ to itself where the branching set is $\overline{a}$ and the branching is of index $2$.

\begin{figure*}[hbt!]
        \centering
        \begin{subfigure}[b]{0.475\textwidth}
            \centering
          \begin{tikzpicture}
	\node[anchor=south west,inner sep=0] (image) at (0,0) {\includegraphics[width=0.8\textwidth]{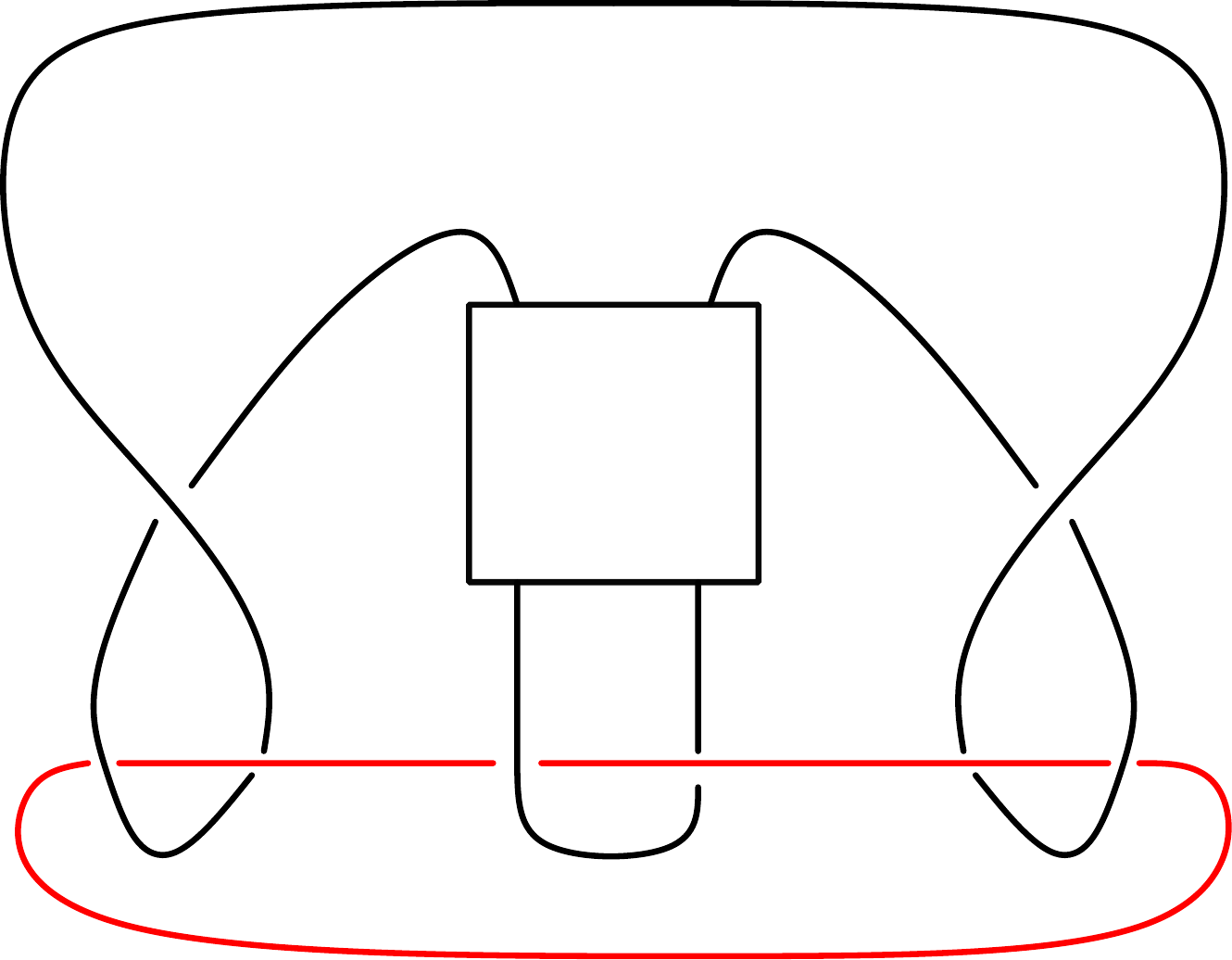}};
	\begin{scope}[x={(image.south east)},y={(image.north west)}]
	\node at (.49,.54){\large $-\pretzvar$};
	\node at (-.07,.8) {\Large $\overline{K_\pretzvar}$};
	\node at (-.05,.1){\large $\textcolor{red}{\overline{a}}$};
    \end{scope}
\end{tikzpicture}
               
            \label{Pretzelquotient}
        \end{subfigure}
        \hfill
        \begin{subfigure}[b]{0.475\textwidth}  
            \centering 
                      \begin{tikzpicture}
	\node[anchor=south west,inner sep=0] (image) at (0,0) {\includegraphics[width=0.55\textwidth]{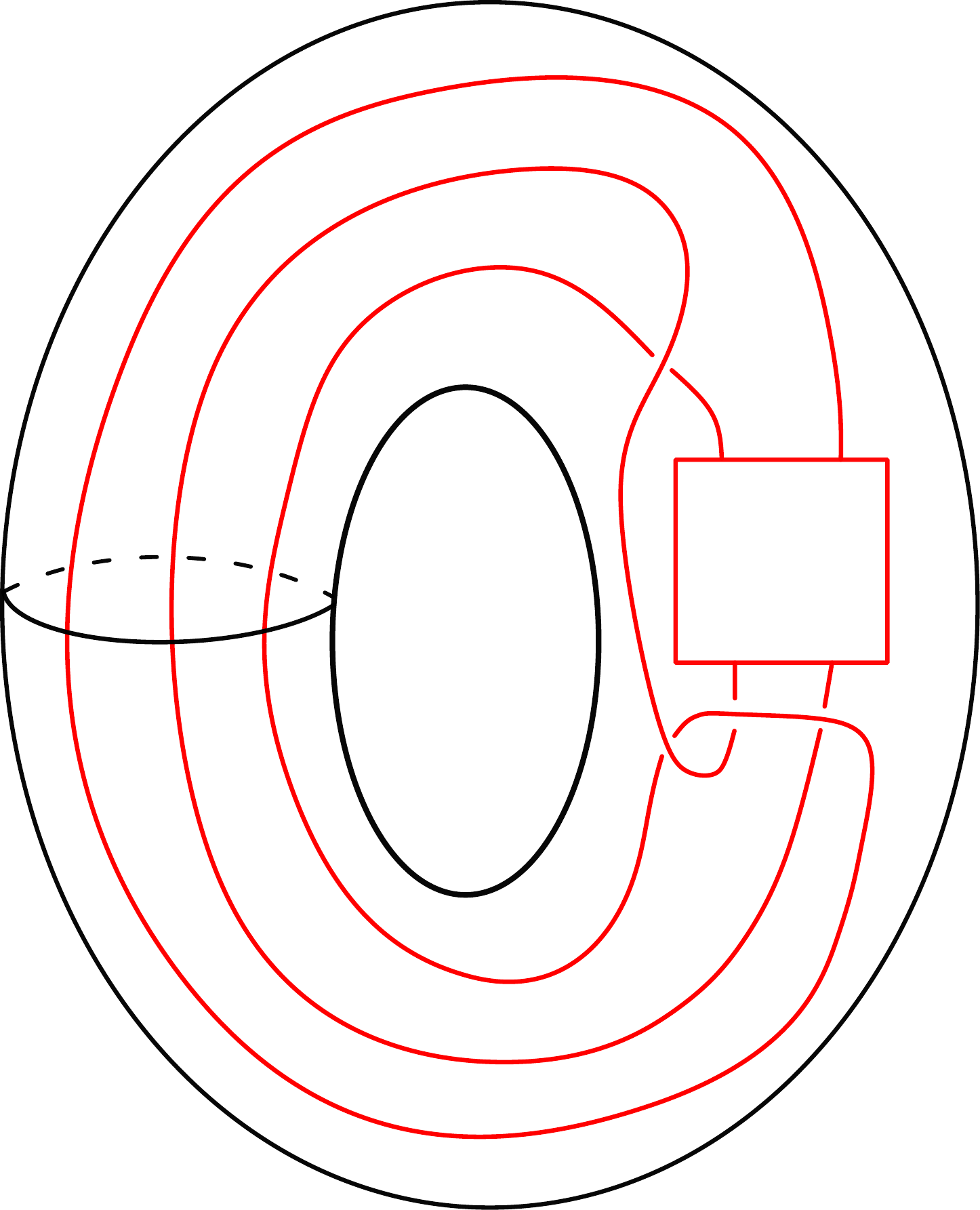}};
	\begin{scope}[x={(image.south east)},y={(image.north west)}]
	\node at (.78,.53){\large $\textcolor{red}{-\pretzvar}$};
	\node at (-.1,.5){\large $\overline{K_\pretzvar}$};
	\end{scope}
\end{tikzpicture}

        \end{subfigure}
       
        \caption{Let $K_\pretzvar=P(-3,-2\pretzvar-1,3)\cong P(3,-3, -2\pretzvar-1)$. There is an isotopy of $\overline{K_\pretzvar}\cup \overline{a}$ to a diagram where $\overline{a}$ sits inside a solid torus with $\overline{K_\pretzvar}$ its meridian} 
   \label{pretzelquotienttorus}
    \end{figure*}    

Taking any partition of the components of an oriented link $L$ into $l$ subsets so that $L=L_1\cup\cdots \cup L_l$, one can consider different branching indices $n_1,\ldots, n_l$ for each sublink. To denote the unique $\cyclicgroup{n_1}\times \cdots \times \cyclicgroup{n_l}$ branched cover obtained by this process, we will write $\Sigma_{n_1,n_2,\ldots, n_l}(L_1\cup L_2\cup\ldots \cup L_l)$. Thus, a feature of links $L$ which admit such a two-fold symmetry with axis $a$ avoiding the link, is that the $n$-fold branched cylic cover $\Sigma_n(L)$ can be expressed as the $\cyclicgroup{n}\times\cyclicgroup{2}$ branched cover of the quotient link $\overline{L}\cup \overline{a}$. The order of branching does not affect the homeomorphism type of the manifold $\Sigma_{n,2}(\overline{L}\cup \overline{a})$. Thus one can consider first branching (of index $n$) over $\overline{L}$, and then branching (of index 2) over the lift of $\overline{a}$ which we denote $L_n$; see Figure \ref{branchedtwoways}. This process and commutative diagram are well-known; see for instance \cite{MV-Cyclic2-bridge}. From this commuting diagram, we see that if $L_n=\phi_{n,1}^{-1}(\overline{a})$, then $ \Sigma_n(L)\cong \Sigma_{n,2}(\overline{L}\cup \overline{a})\cong \Sigma_2(L_n) $ can be expressed both as an n-fold cyclic branched cover over $L$ and a 2-fold branched cover over $L_n$.

\begin{figure}[h!]
\begin{tikzcd}[row sep=huge]
 &\ar[ld,"\rho_{n,1}"']  \Sigma_n(L)\cong\Sigma_{n,2}(\overline{L}\cup \overline{a}) \cong \Sigma_2(L_n)\ar[rd, "\phi_{1,2}"]&\\
\ar[rd,"\rho_{1,2}"'] \Sigma_{1,2}(\overline{L}\cup \overline{a})\cong (S^3,L,a) \hspace{0in} & & \hspace{0in}\Sigma_{n,1}(\overline{L}\cup \overline{a}) \ar[ld,"\phi_{n,1}"]\\
& (S^3, \overline{L},\overline{a})&
\end{tikzcd}

\caption{For 2-periodic link $L$, the $n$-fold branched cover can be expressed in two ways. The left-hand side of the diagram describes $\Sigma_{n,2}(\overline{L}\cup \overline{a})$ as first branching over $\overline{a}$, and then over $L$ (the lift of $\overline{L}$). On the right-hand side, we branch first over $\overline{L}$ and then over the lift of $\overline{a}$ which we denote $L_n$.}

\label{branchedtwoways}
\end{figure}

In the special case that $\overline{L}\cup\overline{a}$ is the union of two unknots, it is easy to understand the manifold $M=\Sigma_{n,1}(\overline{L}\cup \overline{a})$. In particular $M\cong S^3$ and it is possible to obtain a diagram for $(S^3, L_n)$ via the quotient diagram of $\overline{L}\cup \overline{a}$.

\begin{proposition}
\label{proplinkL_n}
$\Sigma_n(P(3,-3,-2\pretzvar-1))\cong \Sigma_2(L_{n,k})$ where $L_{n,k}$ is the link in Figure \ref{LinkLn}.
\end{proposition}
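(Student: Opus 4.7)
The plan is to apply the general covering-space framework of Figure \ref{branchedtwoways} to the specific case $L = K_k = P(3,-3,-2k-1)$ and then to carry out an explicit computation of the lift of the axis $\overline{a}$. The starting point is the commutative diagram, which already gives $\Sigma_n(K_k) \cong \Sigma_2(L_n)$ for some link $L_n = \phi_{n,1}^{-1}(\overline{a})$; what remains is to identify $L_n$ as the link $L_{n,k}$ drawn in Figure \ref{LinkLn}.

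First I would record the structure of the quotient. By Figure \ref{pretzelquotienttorus}, after isotopy the quotient pair $(S^3, \overline{K_k} \cup \overline{a})$ consists of two unknots: $\overline{K_k}$ bounds a meridional disk $D$ of a standardly embedded solid torus $V \subset S^3$, and $\overline{a}$ is a simple closed curve lying inside $V$ whose only nontrivial feature is a single $-k$ twist box. In particular, $\overline{K_k}$ is isotopic to the meridian of $V$, so the algebraic intersection number $D \cdot \overline{a}$ is $\pm 1$ and $D$ intersects $\overline{a}$ transversely.

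Next, since $\overline{K_k}$ is unknotted, the $n$-fold cyclic branched cover $\phi_{n,1}\colon \Sigma_{n,1}(\overline{K_k} \cup \overline{a}) \to S^3$ has total space $S^3$, and can be constructed concretely by cutting $S^3$ open along $D$ and gluing $n$ copies of the resulting ball cyclically. Equivalently, and more conveniently for drawing the result, one unrolls the solid torus $V$ by taking its $n$-fold cyclic cover $\widetilde{V} \to V$ (this is again a solid torus) and caps off with a ball. Under this unrolling, the portion of $\overline{a}$ inside $V$ is lifted to an $n$-strand link, each fundamental domain contributing one copy of the $-k$ twist box, and adjacent copies connected through the gluing in precisely the fashion shown in Figure \ref{LinkLnIntro} with every twist parameter equal to $-k$. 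Thus $L_n = L(-k,-k,\ldots,-k) = L_{n,k}$, which together with the commutative diagram yields the proposition.

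The main obstacle is the explicit identification in the previous paragraph: one must check that when the standard position of $\overline{a}$ inside $V$ is pulled back to the $n$-fold cover, the $n$ copies of the twist box link up with their neighbors exactly as drawn in Figure \ref{LinkLn}, including the correct signs on each twist box and the correct crossings in between. I would carry this out by first performing a planar isotopy of Figure \ref{pretzelquotienttorus} that puts $\overline{a}$ into a "standard" form inside $V$ (so that $D \cap \overline{a}$ is a single point and $\overline{a} \setminus D$ is a simple arc in $V \setminus D$ containing the twist box), then drawing the cyclic cover by concatenating $n$ copies of this arc end-to-end, and finally comparing the resulting diagram directly to Figure \ref{LinkLn}. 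No deep machinery is needed; the content of the proof is this diagrammatic bookkeeping.
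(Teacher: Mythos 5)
Your overall strategy is the one the paper uses: both components of the quotient link are unknots, so the $n$-fold branched cover over $\overline{K_\pretzvar}$ is built by unrolling the complementary solid torus, and $L_{n,\pretzvar}$ is read off by stacking $n$ copies of a fundamental domain. The problem is in the final ``bookkeeping'' step, which is the entire content of the proposition. You propose to isotope $\overline{a}$ inside the solid torus so that it meets the disk $D$ bounded by $\overline{K_\pretzvar}$ in a single point, with $\overline{a}\setminus D$ a single arc. This cannot be done. The algebraic intersection number $D\cdot\overline{a}$ is indeed $\pm 1$, but the geometric intersection number (the wrapping number of $\overline{a}$ in the solid torus) is $3$, and since geometric and algebraic intersection numbers have the same parity, it cannot be reduced to $1$ unless $\overline{a}$ is isotopic to a core with a local knot tied in it. If that were the case, each fundamental domain of the cover would contain a single knotted arc in a ball and $L_{n,\pretzvar}$ would be a connected sum of $n$ copies of a fixed knot --- indeed of the unknot, since $L_{1,\pretzvar}=\overline{a}$ is unknotted --- forcing $\Sigma_n(K_\pretzvar)\cong S^3$ for all $n$, which is false (e.g.\ $\det P(3,-3,-3)=9$, and $P(3,-3,-3)$ is a prime Montesinos knot, not a connected sum). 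It is also incompatible with the target: Figure \ref{LinkLn} is the closure of $n$ copies of the $3$-strand tangle $T(-\pretzvar)$ of Figure \ref{fig:tk_three_tangle2}, not of a $1$-strand tangle.

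The correct version of your computation, which is what the paper does, is to cut the solid torus of Figure \ref{pretzelquotienttorus} along the meridian disk while retaining all three points of $D\cap\overline{a}$; the resulting cylinder contains the $3$-tangle $T(-\pretzvar)$, and stacking $n$ copies and closing up yields Figure \ref{LinkLn}. A smaller imprecision: the branched cover of $S^3$ is reassembled from the unrolled solid torus by gluing in a solid torus over the branch locus (the cyclic branched cover of $D^2\times S^1$ along its core), not a ball; this does not affect the identification of $L_{n,\pretzvar}$, since $\overline{a}$ is disjoint from the branch locus, but your description of the ambient manifold should be adjusted.
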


\begin{proof}
By appealing to the diagram in Figure \ref{branchedtwoways}, it suffices to describe the lift $\overline{a}$ of  $L_{n,\pretzvar}$ after branching of index $n$ over $\overline{K_\pretzvar}$ where $K_\pretzvar=P(3,-3,-2\pretzvar-1)$ and $\overline{K_\pretzvar}$ denotes its quotient under the involution described above and illustrated by Figure \ref{pretzelquotienttorus}.

The manifold $M$ we are interested in understanding is the $n$-fold cover of $\overline{K_\pretzvar}$ which is unknotted. The complement of $\overline{K_\pretzvar}$ is a solid torus $T$ whose lift is also a solid torus $\widetilde{T}$ in $M$. This solid torus $\widetilde{T}$ has meridian $\tilde{\mu}$ and longitude $\tilde{\lambda}$. If $\mu$ and $\lambda$ denote the meridian and longitude of $T$, then $\tilde{\lambda}$ is the lift of $\lambda$ while $\mu$ lifts to $n$ disjoint translates of $\tilde{\mu}$.

 Thus, it is easy to obtain a diagram for $L_{n,\pretzvar}$ the lift of $\overline{a}$. Figure \ref{LinkLn} is obtained by cutting open the torus of Figure \ref{pretzelquotienttorus} along $\overline{K_\pretzvar}$ and ``stacking'' $n$ copies of the corresponding cylinder and finally identifying the ends by the identity.
\end{proof}

\begin{figure}[!htbp]
\begin{tikzpicture}
	\node[anchor=south west,inner sep=0] (image) at (0,0) {\includegraphics[width=0.9\textwidth]{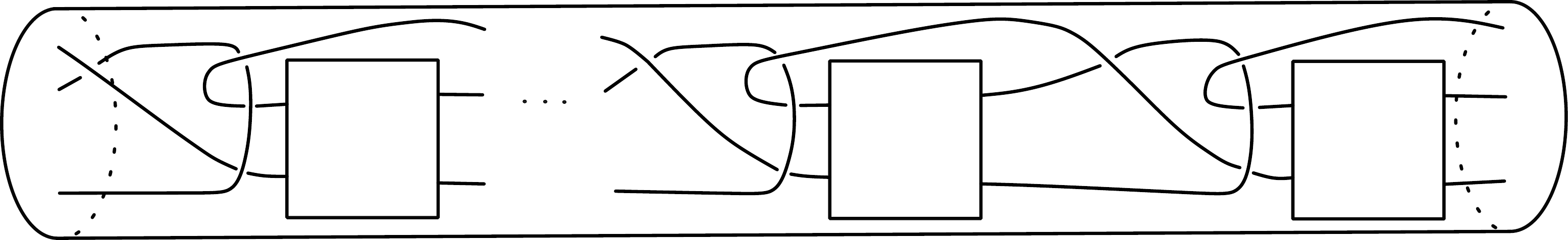}};
	\begin{scope}[x={(image.south east)},y={(image.north west)}]
        \node at (0.23,0.43) {\large $-\pretzvar$};
        \node at (0.58, 0.43){\large $-\pretzvar$};
        \node at (0.87, 0.43){\large $-\pretzvar$};
    \end{scope}
\end{tikzpicture}
\caption{The link $L_{n,\pretzvar}$ inside a torus (obtained by identifying the end disks of the cylinder by the identity map) sitting in $S^3$.}
\label{LinkLn}
\end{figure}

\section{Two-fold quasi-alternating links}\label{TQA}
In this section we recall the definition of two-fold quasi-alternating (TQA) links, as well as a few key facts which we will use. All of this material can be found in \cite{MR3760881}.

A \emph{marked link} is a pair $(L, \omega)$ where $L \subset S^3$ is a link and $\omega$ is an assignment of an element of $\cyclicgroup{2}$ to each component of $L$. If the number of components marked with $1 \in \cyclicgroup{2}$ is even (resp. odd) we say that $(L, \omega)$ is a \emph{two-fold marked link} (resp. \emph{odd marked link}). The trivial two-fold marking assigns $0 \in \cyclicgroup{2}$ to every component of $L$.

Let $D$ be a planar diagram representing a link $L$. An \emph{arc} of $D$ is a strand of $D$ that descends to an edge of the $4$-valent graph formed from $D$ upon forgetting its crossings. A \emph{marking of $D$} is an assignment $\tilde{\omega} : \Gamma(D) \rightarrow \cyclicgroup{2}$, where $\Gamma(D)$ is the set of arcs of $D$. We say $(D, \tilde{\omega})$ is \emph{compatible with} or \emph{represents} $(L, \omega)$ if 
$$\sum_{\gamma \in \Gamma(K)} \tilde{\omega}(\gamma) = \omega(K) \pmod{2},$$
for each component $K$ of $L$. This data can be packaged diagrammatically by placing an odd number of dots on each arc $\gamma$ for which $\tilde{\omega}(\gamma) = 1$. Using this diagrammatic interpretation, when we smooth $D$ at a crossing (see Figure \ref{fig:crossing_resolution}), we naturally obtain two marked diagrams $(D_0, \tilde{\omega}_0)$ and $(D_1, \tilde{\omega}_1)$ by counting dots mod $2$.

  \begin{figure}[h]
  \begin{overpic}[width=230pt]{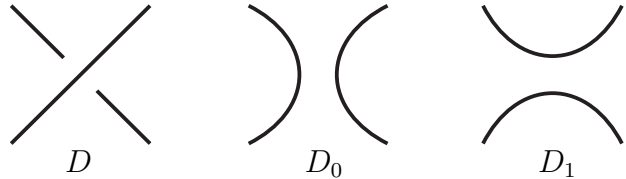}
    \put (9, 4) {$D$}
    \put (48, 4) {$D_0$}
    \put (86, 4) {$D_1$}
  \end{overpic}
\caption{$D$ and its two smoothings $D_0$ and $D_1$ in a neighbourhood of a given crossing.}
\label{fig:crossing_resolution}
\end{figure}

\begin{definition} The set of \emph{two-fold quasi-alternating (TQA) two-fold marked links}, denoted $\mathcal{Q}$, is the smallest set of two-fold marked links satisfying the following:
\begin{enumerate}
  \item The unknot with its unique trivial two-fold marking data is in $\mathcal{Q}$.
  \item Any two-fold marked link that splits into two odd-marked links is in $\mathcal{Q}$.
  \item Let $(D, \tilde{\omega})$ be a marked diagram representing $(L, \omega)$ such that the two smoothings $(D_0, \tilde{\omega}_0)$ and $(D_1, \tilde{\omega}_1)$ at a crossing represent marked links $(L_0, \omega_0)$ and $(L_1, \omega_1)$, respectively (see Figure \ref{fig:crossing_resolution}). If
    \begin{itemize}
      \item both smoothings $(L_0, \omega_0)$ and $(L_1, \omega_1)$ are in $\mathcal{Q}$, and
      \item $\det(L) = \det(L_0) + \det(L_1)$
      \end{itemize}
      then $(L, \omega)$ is in $\mathcal{Q}$.
    \end{enumerate}
    We say that a link $L$ is \emph{two-fold quasi-alternating} (TQA) if for the trivial marking $\omega$, we have $(L, \omega) \in \mathcal{Q}$.
  \end{definition}

All non-split alternating links are TQA. Finally, the key fact about two-fold quasi-alternating links which we will use is the following; see \cite[Corollary 1]{MR3760881}.

\begin{theorem} If a link $L$ is TQA, then $\Sigma_2(L)$ is a Heegaard Floer L-space with $\cyclicgroup{2}$ coefficients.
\end{theorem}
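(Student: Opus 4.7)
The plan is to reduce the statement to a Khovanov-homological property of $L$ and then run the Ozsváth--Szabó spectral sequence. The intermediate claim I would first establish is that every TQA link $L$ has $\mathbb{Z}/2\mathbb{Z}$-thin reduced Khovanov homology, in the sense that $\widetilde{Kh}(L;\mathbb{Z}/2)$ is supported on a single $\delta$-diagonal and has total dimension equal to $\det(L)$. I would prove this by induction on the recursive construction of $\mathcal{Q}$. The base case is immediate, and the split case follows from the K\"unneth-type formula for the Khovanov homology of a disjoint union together with the fact that the odd-marked reduced theory on each component contributes an even-dimensional free factor. For the inductive step, the long exact skein sequence relating $\widetilde{Kh}(L;\mathbb{Z}/2)$, $\widetilde{Kh}(L_0;\mathbb{Z}/2)$, and $\widetilde{Kh}(L_1;\mathbb{Z}/2)$, with the bigrading shifts tracked using the markings $\tilde\omega_0, \tilde\omega_1$, forces $L$ to be thin: the thinness of $L_0$ and $L_1$ concentrates the outer terms on parallel diagonals, and the condition $\det(L) = \det(L_0)+\det(L_1)$ implies the connecting map vanishes so that the sequence splits into short exact sequences.

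With thinness in hand, I would invoke the Ozsv\'ath--Szabó spectral sequence with $\mathbb{Z}/2$ coefficients whose $E_2$-page is $\widetilde{Kh}(\overline{L};\mathbb{Z}/2)$ (mirror of $L$) and whose $E_\infty$-page is (the associated graded of a filtration on) $\widehat{HF}(\Sigma_2(L);\mathbb{Z}/2)$. Since the $E_\infty$-page is a subquotient of $E_2$, and Khovanov thinness is mirror-invariant, we obtain
\[
\dim_{\mathbb{Z}/2}\widehat{HF}(\Sigma_2(L);\mathbb{Z}/2)\;\le\;\dim_{\mathbb{Z}/2}\widetilde{Kh}(L;\mathbb{Z}/2)\;=\;\det(L)\;=\;\bigl|H_1(\Sigma_2(L);\mathbb{Z})\bigr|,
\]
using the standard identification $\det(L) = |H_1(\Sigma_2(L))|$ for links with nonzero determinant. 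Combined with the universal lower bound $\dim_{\mathbb{Z}/2}\widehat{HF}(Y;\mathbb{Z}/2)\ge |H_1(Y;\mathbb{Z})|$ for any rational homology 3-sphere $Y$, the inequality becomes an equality, which is exactly the definition of $\Sigma_2(L)$ being a $\mathbb{Z}/2$-coefficient L-space.

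The main obstacle is the inductive step for Khovanov thinness: one has to bookkeep the marking data $\tilde\omega$ carefully through the skein exact triangle, both in terms of which arcs of $D_0$ and $D_1$ inherit dots and in terms of the bigrading shifts that depend on the parity of the marking on each component. Once that bookkeeping is in place, the determinant-additivity hypothesis is exactly what is needed to kill the connecting differential, and the rest of the argument is a formal comparison of dimensions through a standard spectral sequence. This is essentially the route taken in \cite{MR3760881}, which is the reference I would cite when using this result in the sequel.
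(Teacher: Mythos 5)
The paper does not actually prove this statement; it is quoted verbatim from Scaduto--Stoffregen \cite[Corollary 1]{MR3760881}, and your outline is a reconstruction of that reference's argument (thinness established by induction over the recursion defining $\mathcal{Q}$, followed by the Ozsv\'ath--Szab\'o spectral sequence and the dimension comparison $\dim \widehat{HF}(Y;\mathbb{Z}/2) \ge |H_1(Y;\mathbb{Z})|$). So the overall route is the intended one. However, your intermediate claim is false as stated, and the failure occurs exactly at one of the two base cases of the induction. You assert that every link arising in the recursive construction of $\mathcal{Q}$ has ordinary reduced Khovanov homology $\widetilde{Kh}(\,\cdot\,;\mathbb{Z}/2)$ of total dimension $\det(L)$. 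But the recursion passes through nontrivially marked links, in particular the split links of case (2) of the definition: these have $\det = 0$, while their ordinary reduced Khovanov homology over $\mathbb{Z}/2$ is nonzero (the two-component unlink already has $\dim \widetilde{Kh} = 2$). The invariant on which the induction must be run is Scaduto--Stoffregen's \emph{marked} (two-fold reduced) Khovanov complex, in which an odd marking on a component makes the complex acyclic over $\mathbb{Z}/2$. That vanishing --- not an ``even-dimensional free factor'' --- is what makes the split case consistent with $\dim = \det = 0$, and it is also what collapses the skein triangle when one smoothing is an odd-marked split link (precisely the situation exploited in Section \ref{OurLinksTQA}, where $L_1$ is the two-component unlink with both components marked and $\det(L_1)=0$). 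For trivially marked links the marked invariant coincides with $\widetilde{Kh}(\,\cdot\,;\mathbb{Z}/2)$, so your final conclusion for TQA links is recovered once the induction is reformulated for the marked theory.

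Two smaller points. First, your spectral sequence step tacitly assumes $\Sigma_2(L)$ is a rational homology sphere; you should check (or cite) that $\det(L) > 0$ for every TQA link $L$ with its trivial marking, since otherwise the lower bound $\dim \widehat{HF} \ge |H_1|$ and the identification $\det(L) = |H_1(\Sigma_2(L))|$ do not apply. Second, since the whole argument is the content of \cite{MR3760881}, in the context of this paper the correct ``proof'' is simply the citation; reproducing the induction is only necessary if you intend to make the marked bookkeeping explicit.
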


\section{$L_{n,\pretzvar}$ are two-fold quasi-alternating}\label{OurLinksTQA}
The goal of this section is to show that all the links $L_{n,\pretzvar}$ in Figure \ref{LinkLn} are two-fold quasi-alternating. This implies that their double branched covers are L-spaces. We will in fact show that a broader family of links is TQA. We first define the links of interests.

\begin{definition}
  Let $T(k)$ denote the $3$-tangle shown in Figure \ref{fig:tk_three_tangle2}, where $k \in \mathbb{Z} \cup \{\infty\}$. If $T_1$ and $T_2$ are $3$-tangles, denote by $T_1 \oplus T_2$ the $3$-tangle obtained by stacking the two tangles, with $T_1$ to the left of $T_2$; see for example Figure \ref{fig:tk_plus_tinfty}. Let $L(k_1, \ldots, k_n) \subset S^3$ denote the link given by the closure of the tangle $T(k_1) \oplus T(k_2) \oplus \cdots \oplus T(k_n)$, where $k_i \in \mathbb{Z} \cup \{\infty\}$ for all $i$.
\end{definition}

The link $L_{n,\pretzvar}$ in Figure \ref{LinkLn} is the link $L(-\pretzvar, -\pretzvar, \ldots, -\pretzvar)$. We will show that these links are all TQA.

\begin{figure}[h]
  \begin{overpic}[height=80pt, grid=false]{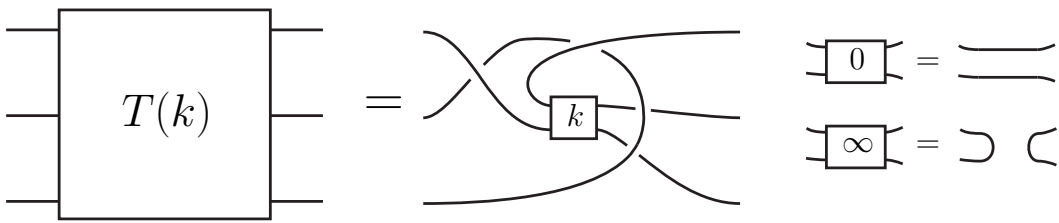}
    \put (11, 9) {\fontsize{18}{18}$T(k)$}
    \put (53.3, 9) {$k$}
    \put (34, 10) {\fontsize{20}{20}$=$}
    \put (80.2, 14.5) {$0$}
    \put (79.5, 6.5) {$\infty$}
    \put (86.5, 6.5) {$=$}
    \put (86.5, 14.5) {$=$}
  \end{overpic}
  \caption{The tangle $T(k)$ where $k$ is the number of signed half-twists. See Figure \ref{pretzelandsymmetry} for an example of our signed twist conventions. We also allow the case $k=\infty$ which by convention is as shown.}
  \label{fig:tk_three_tangle2}
\end{figure}

\begin{lemma}\label{lemma:lk_is_a_knot}The link $L(k_1, k_2, ..., k_n)$ is a knot provided no $k_i=\infty$.
\end{lemma}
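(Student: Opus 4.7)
The plan is to establish the result by a strand-tracking argument on the tangle decomposition. The key observation is that for any finite $k \in \mathbb{Z}$, the tangle $T(k)$ contains no cups or caps: every strand of $T(k)$ connects a top endpoint to a bottom endpoint, since the twist box labeled $k$ simply consists of $|k|$ crossings between two strands that enter and exit the box (cups and caps only appear in the excluded degenerate case $k = \infty$). Consequently, by forgetting the over/under information at crossings, each tangle $T(k)$ with finite $k$ descends to a permutation of $\{1,2,3\}$ matching top endpoint labels to bottom endpoint labels.

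A direct inspection of Figure \ref{fig:tk_three_tangle2} shows that this permutation is the \emph{same} for all finite values of $k$, since changing the number of half-twists in the twist box alters only the crossing information and not the underlying connectivity of strands. Call this common permutation $\pi \in S_3$. The composed tangle $T(k_1) \oplus \cdots \oplus T(k_n)$ therefore induces the permutation $\pi^n$ on $\{1,2,3\}$, and the closure $L(k_1, \ldots, k_n)$, as described in the torus picture of Section \ref{PretzelCovers} (in which $n$ cylinder copies are stacked and the end disks identified), identifies top and bottom endpoints of the stacked tangle according to a fixed permutation $\tau \in S_3$ read off the closure diagram.

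The number of components of $L(k_1, \ldots, k_n)$ then equals the number of cycles of $\tau \pi^n \in S_3$, so the link is a knot if and only if $\tau \pi^n$ is a single $3$-cycle. The final step is a direct verification, reading $\pi$ and $\tau$ off the diagrams in Figures \ref{fig:tk_three_tangle2} and \ref{LinkLn}, that $\tau \pi^n$ is a single $3$-cycle for every $n \geq 1$. This reduces to a short case analysis on $n$ modulo the order of $\pi$. The main obstacle is the diagrammatic identification of $\pi$ and $\tau$; once these are in hand, the cycle count is routine and the conclusion follows. Crucially, since the argument depends only on the underlying strand connectivity (and not on crossing signs), it is uniform in the choice of finite integers $k_1,\ldots,k_n$.
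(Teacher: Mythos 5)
Your proposal follows the same broad strategy as the paper (reduce the component count to the underlying endpoint connectivity of the tangles), but it rests on two claims about $T(k)$ that are both false, and together they make your deferred ``routine verification'' impossible rather than routine. First, $T(k)$ for finite $k$ is \emph{not} a permutation tangle: it is not true that each of the three strands runs from one end of the tangle to the other. If it were, with induced permutation $\pi\in S_3$, then since the closure identifies the two end disks of the stacked cylinder by the identity map, the number of components of $L(k,\ldots,k)$ ($n$ copies) would be the number of cycles of $\pi^n$; taking $n=\mathrm{ord}(\pi)\le 3$ gives $\pi^n=e$, which has three cycles, so some $L(k,\ldots,k)$ with $n\le 3$ would be a three-component link --- contradicting the lemma itself and Lemma \ref{lemma:alt_base_case}. (Your extra closure permutation $\tau$ cannot rescue this: the identification of the end disks is the identity, so $\tau=e$.) The tangles $T(0)$ and $T(1)$ in fact contain a cup and a cap (a strand entering and leaving on the same side of the tangle), which is exactly what allows composition to stabilize; this is reflected in the relations the paper verifies, $T(a)\oplus T(b)\sim T(b)$ for $a,b\in\{0,1\}$, which are inconsistent with composing nontrivial permutations.

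Second, the connectivity of $T(k)$ is not independent of the finite value of $k$: adding a single half-twist to the twist box transposes the two endpoints of the strands passing through it, so the endpoint matching of $T(k)$ depends on the parity of $k$. This is precisely why the paper reduces to the two homotopy classes $T(0)$ and $T(1)$ and checks all four compositions $T(a)\oplus T(b)$ with $a,b\in\{0,1\}$, rather than handling a single case. To repair your argument you would need to (i) record the actual endpoint matchings of $T(0)$ and $T(1)$, including their cups and caps, (ii) verify the four composition rules showing $T(k_1)\oplus\cdots\oplus T(k_n)\sim T(k)$ for some $k\in\{0,1\}$, and (iii) check directly that the closures of $T(0)$ and $T(1)$ are knots --- which is exactly the paper's proof.
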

\begin{proof} The link $L = L(k_1, k_2, ..., k_n)$ is the closure of $T(k_1) \oplus \cdots \oplus T(k_n)$. To count the number of components of $L$, we only need to keep track of the way each tangle $T(k_i)$ connects the $6$ endpoints of the tangle, which is determined by the parity of $k_i$. If $T$ and $T'$ are $3$-tangles, write $T \sim T'$ if $T$ and $T'$ are homotopic, in other words, they have the same connectivity of endpoints and number of connected components. One can check the four cases: $T(0) \oplus T(1) \sim T(1)$, $T(0) \oplus T(0) \sim T(0)$, $T(1) \oplus T(0) \sim T(0)$ and $T(1) \oplus T(1) \sim T(1)$. Thus, $T(k_1) \oplus \cdots \oplus T(k_n) \sim T(k)$, where $k \in \{0,1\}$. Finally, the closure of both $T(0)$ and $T(1)$ are knots, hence $L$ is a knot.
\end{proof}

\begin{lemma}\label{lemma:split_base_case} Let $L = L(k_1,\ldots,k_n)$ with $k_i \in \mathbb{Z} \cup \{\infty\}$ for all $i$. If $k_i = \infty$ for exactly one value of $i \in \{1,\ldots,n\}$, then $L$ is the two component unlink.

\end{lemma}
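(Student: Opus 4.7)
The plan is to absorb the twists of every tangle except $T(\infty)$ into the two caps of $T(\infty)$ via ambient isotopy, thereby reducing to the base case $L(\infty)$, which I would then verify directly is the two-component unlink.

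For the base case $n = 1$, the link $L(\infty)$ decomposes naturally into two components: the top strand of $T(\infty)$ together with the corresponding closure arc forms one unknotted circle, while the middle and bottom strands, together with the two caps of $T(\infty)$ and the middle/bottom closure arcs, form a second unknotted circle (traced as middle-left $\to$ bottom-left via the left cap, then bottom-left $\to$ bottom-right via closure, then bottom-right $\to$ middle-right via the right cap, then middle-right $\to$ middle-left via closure). The two circles lie in separated horizontal slabs, meet the diagram in disjoint regions, and are each an unknot, so $L(\infty)$ is the two-component unlink.

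For $n \geq 2$, I would induct on $n$, assuming without loss of generality that $i < n$ (the symmetric case $i > 1$ is handled analogously on the left side). The right cap of $T(\infty)$ joins the middle and bottom strands entering $T(k_{i+1})$ into a single unknotted arc, which provides a ``free'' cap that can be rotated to absorb the $k_{i+1}$ half-twists of $T(k_{i+1})$ via a local ambient isotopy. This isotopy is valid precisely because the top strand of $T(k_{i+1})$ passes straight through without crossing the middle or bottom strands, so it takes place inside a 3-ball disjoint from the top strand. After absorption, the tangle $T(k_{i+1})$ becomes the trivial tangle $T(0)$ and may be deleted, yielding a link isotopic to $L(k_1, \ldots, k_{i-1}, \infty, k_{i+2}, \ldots, k_n)$. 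Iterating this reduction on both sides of $T(\infty)$ eliminates every finite $k_j$ and reduces the link to $L(\infty)$, which is the two-component unlink by the base case.

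I expect the main obstacle is making precise the cap-rotation isotopy, namely, verifying that a cap adjacent to $k$ half-twists in the two strands meeting the cap is ambient-isotopic (rel the remaining boundary) to the cap with no twists, and then checking that this local isotopy extends to the global link. This amounts to identifying a standard 3-ball bounding a disk containing the cap and the twist box, and using that the top strand is disjoint from this 3-ball.
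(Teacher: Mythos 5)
Your proposal is correct and is essentially the paper's argument: your ``cap-rotation'' absorption of an adjacent twist region into $T(\infty)$ is exactly the paper's tangle identity $T(k)\oplus T(\infty)=T(\infty)$ (Figure \ref{fig:tk_plus_tinfty}), applied repeatedly until only $T(\infty)$ remains, whose closure is the two-component unlink. The only cosmetic difference is that the paper first invokes invariance of the closure under cyclic permutation of $(k_1,\ldots,k_n)$ to place the $\infty$ tangle last and then absorbs from one side only, whereas you absorb on both sides of $T(\infty)$.
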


\begin{proof} Notice that the link $L = L(k_1,\ldots,k_n)$ is unchanged under any cyclic permutation of the parameters $(k_1,\ldots,k_n)$. Hence, we may assume that $k_n = \infty$. Figure \ref{fig:tk_plus_tinfty} shows that $T(k) \oplus T(\infty)$ is isotopic to $T(\infty)$ as tangles, provided $k \neq \infty$. Applying this $n-1$ times, we see that $T(k_1) \oplus \cdots \oplus T(k_n)$ is the same as $T(\infty)$. Hence, $L$ is the closure of $T(\infty)$ which is the two component unlink.
\end{proof}

\begin{figure}[h!]
  \begin{overpic}[height=90pt, grid=false]{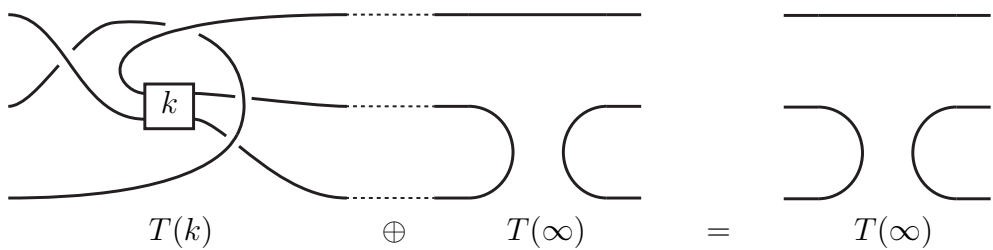}
    \put (14.3, 1) {$T(k)$}
    \put (50.7, 1) {$T(\infty)$}
    \put (38, 1) {$\oplus$}
    \put (71, 1) {$=$}
    \put (86, 1) {$T(\infty)$}
    \put (15.5, 14) {$k$}
  \end{overpic}
  \caption{The identity $T(k) \oplus T(\infty) = T(\infty)$.}
  \label{fig:tk_plus_tinfty}
\end{figure}

\begin{lemma}\label{lemma:alt_base_case} The links of the form $L(0,0,\ldots,0)$ are all non-split alternating knots.
\end{lemma}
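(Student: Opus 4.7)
The plan is to write down the standard planar diagram of $L(0,0,\ldots,0)$ obtained from its definition as the closure of $T(0) \oplus T(0) \oplus \cdots \oplus T(0)$, and to verify by direct inspection that this diagram is connected, reduced, and alternating. By Menasco's theorem \cite{Men-Alternating}, any connected reduced alternating diagram represents a non-split link, so this will give that $L(0,\ldots,0)$ is a non-split alternating link. The fact that it is in fact a knot follows immediately from Lemma \ref{lemma:lk_is_a_knot}.

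First I would draw the $3$-tangle $T(0)$ from Figure \ref{fig:tk_three_tangle2} with the twist box replaced by the rational $0$-tangle (two parallel horizontal strands), and observe that the remaining fixed crossings of $T(0)$ form an alternating pattern with a predictable over/under height ordering at the six boundary endpoints of the tangle. Next I would glue $n$ copies horizontally to form $T(0) \oplus \cdots \oplus T(0)$, checking that the alternating pattern is preserved across each gluing, i.e.\ that each strand exiting one copy as an overstrand enters the next copy as an understrand. Finally, I would take the closure and confirm that the last-to-first identification respects the over/under pattern, yielding a globally alternating closed diagram.

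To complete the argument I would check that the resulting diagram is connected and reduced. Connectedness is immediate because the three strands inside each copy of $T(0)$ are intertwined by its crossings, so the underlying $4$-valent graph is connected; reducedness (absence of nugatory crossings) is clear from the explicit form of $T(0)$, since every crossing has all four of its local arcs belonging to distinct regions of the diagram. Menasco's theorem then delivers non-splitness.

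The main obstacle I anticipate is the careful bookkeeping required to verify the alternating condition rather than hand-waving at a picture: in particular, one must track the heights at the three left-boundary and three right-boundary endpoints of each copy of $T(0)$ to ensure the alternating pattern survives both stacking and closure. This is a small finite check, but it is the crux of the proof and must be made precise in order to legitimately invoke \cite{Men-Alternating}.
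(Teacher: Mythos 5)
Your overall strategy---produce a connected alternating diagram and invoke Menasco's theorem \cite{Men-Alternating} for non-splitness, with knottedness coming from Lemma \ref{lemma:lk_is_a_knot}---is exactly the strategy of the paper. The gap is in the step you yourself identify as the crux: you propose to verify by direct inspection that the \emph{standard} diagram of the closure of $T(0) \oplus \cdots \oplus T(0)$ is alternating, but that verification fails. The diagram of $T(0)$ as given in Figure \ref{fig:tk_three_tangle2} does not stack alternatingly: the over/under pattern at the tangle boundary is incompatible with gluing a copy of the same tangle to its right, which is precisely why the paper does not use the standard diagram directly.

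The paper's fix is to first isotope $T(0)$ and then conjugate it by a tangle $A$ interchanging the top two strands, producing a genuinely alternating tangle $T = A^{-1} \cdot T(0) \cdot A$ (Figure \ref{fig:alternating_tangle}) whose boundary heights are self-compatible under stacking. Since the closure of $T \oplus \cdots \oplus T$ equals the closure of $A^{-1}\bigl(T(0) \oplus \cdots \oplus T(0)\bigr)A$, and conjugating tangles cancel under the closure, this represents the same link $L(0,\ldots,0)$, now with a visibly connected alternating diagram. So your plan needs this extra preparatory move before the ``small finite check'' can succeed; as written, the bookkeeping you anticipate would reveal that the alternating condition is violated at each interface between consecutive copies of $T(0)$. (Your additional check of reducedness is harmless but unnecessary: Menasco's non-splitness result only requires a connected alternating diagram.)
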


\begin{proof} The link $L = L(0,0,\ldots,0)$ is the closure of $T(0) \oplus \cdots \oplus T(0)$. It is a knot by Lemma \ref{lemma:lk_is_a_knot}. If $T$ is a tangle conjugate to $T(0)$ then $L$ is also given by the closure of $T \oplus \cdots \oplus T$. Figure \ref{fig:alternating_tangle} shows that we can conjugate $T(0)$ to an alternating tangle $T$, from which we see that the closure of $T \oplus \cdots \oplus T$ has a non-split alternating diagram.
\end{proof}

  \begin{figure}[h]
  \begin{overpic}[width=\textwidth, grid=false]{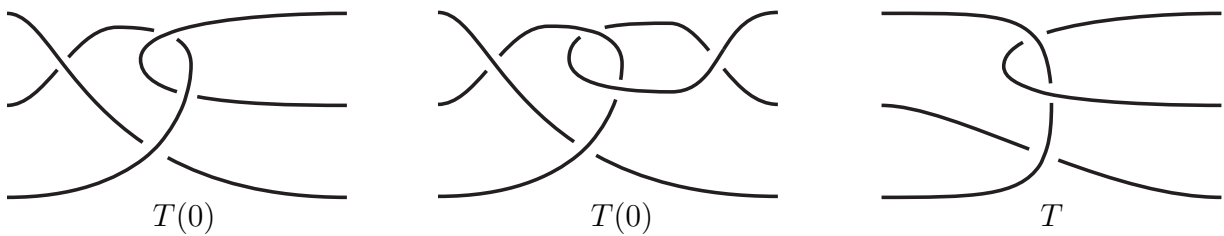}
    \put (12, 1) {$T(0)$}
    \put (48, 1) {$T(0)$}
    \put (85, 1) {$T$}
  \end{overpic}
  \caption{The alternating tangle $T$ is obtained from $T(0)$ by conjugating by a tangle interchanging the top two strands. The middle diagram of $T(0)$ is obtained from the one on the left by an isotopy. }
  \label{fig:alternating_tangle}
\end{figure}

\begingroup
\def\thetheorem{\ref{LnTQA}}
\begin{theorem} The knot $L(k_1, \ldots, k_n)$, where $k_1, \ldots, k_n$ are integers is two-fold quasi-alternating.
\end{theorem}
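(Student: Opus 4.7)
The plan is to proceed by induction on $N := \sum_{i=1}^n |k_i|$. The base case $N = 0$ is handled directly by Lemma \ref{lemma:alt_base_case}: $L(0, \ldots, 0)$ is a non-split alternating knot, and every non-split alternating link is TQA.

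For the inductive step, assume $|k_j| \geq 1$ for some $j$; by the cyclic symmetry of the closure we may take $j = 1$. Pick a crossing $c$ in the twist region of $T(k_1)$. Before smoothing, I would equip the diagram with a two-dot marking placed on arcs chosen so that (a) both dots lie on the single component of $L$, which is a knot by Lemma \ref{lemma:lk_is_a_knot}, so the marking on $L$ is trivial, and (b) in the resolution of $c$ that inserts caps into the twist box, the two dots end up on distinct components of the resulting link $L_1$. Such arcs can be selected because Lemma \ref{lemma:split_base_case} identifies $L_1$ as a two-component unlink with understandable components, so one can place a dot on each side of the splitting.

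Resolving $c$ then produces two marked links. The first, $(L_0, \omega_0) = (L(k_1 - \operatorname{sign}(k_1), k_2, \ldots, k_n), 0)$, has strictly smaller $N$ and trivial net marking (the two dots are on the single knot component and cancel), so it is TQA by the inductive hypothesis. The second, $(L_1, \omega_1)$, is obtained from the cap-producing smoothing of $c$; after absorbing the residual twists at the caps via Reidemeister I moves, the underlying link is $L(\infty, k_2, \ldots, k_n)$, which by Lemma \ref{lemma:split_base_case} is the two-component unlink. By our choice of dot placement, the marking is $(1, 1)$, so $(L_1, \omega_1)$ is the split union of two odd-marked unknots and lies in $\mathcal{Q}$ by condition (2) of the TQA definition.

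The remaining ingredient is the determinant condition $\det(L) = \det(L_0) + \det(L_1)$. Since $L_1$ is a split link, $\det(L_1) = 0$, and the condition reduces to $\det(L) = \det(L_0)$, i.e.\ the invariance of the determinant under reducing the chosen twist count in $T(k_1)$ by one. This is the main technical obstacle, to be verified by a direct Goeritz-matrix (equivalently, Kauffman bracket) computation. Supporting evidence that the identity holds is the classical fact that $\det(P(3, -3, -2k-1)) = 9$ independently of $k$, which, via the presentation of $\Sigma_2(L_{n,k})$ in Proposition \ref{proplinkL_n}, gives precisely such invariance of $\det(L_{n,k})$ in the case $n = 2$.
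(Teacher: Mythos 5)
Your induction, your choice of crossing, and your marking scheme are exactly those of the paper's proof: resolve one crossing in a nonempty twist box to obtain $L_0 = L(k_1 - \operatorname{sign}(k_1), k_2, \ldots, k_n)$, handled by the inductive hypothesis, and $L_1 = L(\infty, k_2, \ldots, k_n)$, the two-component unlink (Lemma \ref{lemma:split_base_case}), with one dot placed on each of the two would-be unlink components so that the marking is trivial on the knot $L$ (Lemma \ref{lemma:lk_is_a_knot}) and $(L_1,\omega_1)$ splits into two odd-marked unknots. The one place where your write-up falls short of a proof is the determinant condition: you correctly reduce it to $\det(L) = \det(L_0)$, but then declare this ``the main technical obstacle, to be verified by a direct Goeritz-matrix computation,'' offering only the heuristic that $\det(P(3,-3,-2k-1)) = 9$ for all $k$. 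As written, the inductive step is therefore not closed.

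In fact no computation is needed, which is the point the paper disposes of in one sentence. For any crossing of a connected diagram, the three determinants in the unoriented skein triangle satisfy an exact signed integer relation $\pm\det(L) = \pm\det(L_0) \pm \det(L_1)$: this follows by expanding the Goeritz determinant in the entry affected by the crossing, or from the fact that $\Sigma_2(L)$, $\Sigma_2(L_0)$, $\Sigma_2(L_1)$ form a surgery triad, so that the orders of their first homology groups are $|a+b|$, $|a|$, $|b|$ for suitable integers $a,b$ (with order $0$ when $b_1 > 0$). Since $L_1$ is split, $\det(L_1)=0$, and the relation forces $\det(L) = \det(L_0)$, i.e.\ the hypothesis $\det(L) = \det(L_0) + \det(L_1)$ of the TQA recursion holds automatically. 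With that observation inserted, your argument coincides with the paper's; without it, the ``supporting evidence'' from the pretzel determinants covers only the case $n=2$, $k_1=\cdots=k_n=-k$ and does not establish the identity for general $(k_1,\ldots,k_n)$.
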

\addtocounter{theorem}{-1}
\endgroup

\begin{proof} We prove this by induction on $N = |k_1| + \cdots + |k_n|$. Let $L = L(k_1, \ldots, k_n)$. If $N = 0$ then $k_i = 0$ for all $i$ and $L$ is a non-split alternating knot by Lemma \ref{lemma:alt_base_case}, and hence is TQA. Assume that $N > 0$ and let $i \in \{1,\ldots,n\}$ be an index such that $|k_i| > 0$.
 
  For convenience, let $\varepsilon = \mbox{sign}(k_i) \in \{\pm 1\}$. Pick any crossing in the twist box labeled $k_i$. Smoothing $L$ at the crossing results in two links: $L_0 = L(k_1, \ldots, k_{i-1}, \varepsilon(|k_i| - 1), k_{i+1}, \ldots, k_n)$ and $L_1 = L(k_1, \ldots, k_{i-1}, \infty, k_{i+1}, \ldots, k_n)$. By Lemma \ref{lemma:split_base_case}, $L_1$ is the two component unlink. Let $D$ denote our diagram for $L$, and take any two arcs of $D$ which, after smoothing the crossing, belong to the two distinct components of $L_1$. Define a marking on $D$ which assigns $1 \in \cyclicgroup{2}$ (a single dot) to each of these two arcs, and $0$ (no dots) to all other arcs. Since $L$ is a knot by Lemma \ref{lemma:lk_is_a_knot}, this marking represents the trivial marking of $L$ (the two dots cancel $\mbox{mod }2$). Smoothing the crossing, with this choice of marking, we obtain marked diagrams representing:

\begin{itemize}
\item $L_0$ with its trivial marked diagram since $L_0$ is a knot by Lemma \ref{lemma:lk_is_a_knot}. This marked link is TQA by the induction hypothesis.
\item The two component unlink $L_1$, where each component is assigned $1$. This is TQA as it splits into two odd marked unknots.
\end{itemize}
We note that the determinant condition $\det(L) = \det(L_0) + \det(L_1)$ is automatically satisfied since $\det(L_1) = 0$. Hence, $L$ (with its trivial marking) is TQA.

\end{proof}

Since the links $L_{n,\pretzvar}$ in Figure \ref{LinkLn} are the links of the form $L(-\pretzvar, -\pretzvar, \ldots, -\pretzvar)$, the above theorem implies that they are all TQA. Combining this with Proposition \ref{proplinkL_n}, we obtain the following.

\begingroup
\def\thetheorem{\ref{pretzelsL-spaces}}
\begin{theorem}
The branched cover $\Sigma_n(P(3,-3,-2\pretzvar-1))$ is an L-space for all integers $k$ and $n \ge 1$.
\end{theorem}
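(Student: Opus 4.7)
The plan is to combine the three main ingredients assembled earlier in the paper, with essentially no new work beyond composition. Proposition \ref{proplinkL_n} identifies $\Sigma_n(P(3,-3,-2k-1))$ with the double branched cover $\Sigma_2(L_{n,k})$ of the link $L_{n,k}$ drawn in Figure \ref{LinkLn}. Theorem \ref{LnTQA} shows that this link (and much more generally any $L(k_1,\ldots,k_n)$) is two-fold quasi-alternating. The Scaduto--Stoffregen theorem recalled at the end of Section \ref{TQA} then converts the TQA property into the desired L-space conclusion.

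Concretely, I would first match notations: in the language of Section \ref{OurLinksTQA}, the link $L_{n,k}$ of Figure \ref{LinkLn} is precisely $L(-k,-k,\ldots,-k)$ with $n$ repetitions of $-k$. Theorem \ref{LnTQA} then applies directly and yields that $L_{n,k}$ is TQA. Invoking \cite[Corollary 1]{MR3760881}, TQA links have Heegaard Floer L-space double branched covers, so $\Sigma_2(L_{n,k})$ is an L-space. Finally, chaining with the homeomorphism $\Sigma_n(P(3,-3,-2k-1)) \cong \Sigma_2(L_{n,k})$ furnished by Proposition \ref{proplinkL_n} completes the argument.

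There is no genuine obstacle in this final assembly, because all of the technical content lives earlier in the paper. The key work was carried out in Section \ref{PretzelCovers}, where the $2$-periodic involution on the pretzel knot is exploited (via the commuting square of branched covers in Figure \ref{branchedtwoways}) to produce the double-branched-cover presentation, and in Section \ref{OurLinksTQA}, where Theorem \ref{LnTQA} is established by induction on the total twist count $N = |k_1| + \cdots + |k_n|$, using Lemmas \ref{lemma:lk_is_a_knot}, \ref{lemma:split_base_case}, and \ref{lemma:alt_base_case} to handle connectivity, the split unknot smoothing, and the non-split alternating base case respectively. Given all of this, the proof of Theorem \ref{pretzelsL-spaces} itself is essentially a three-line corollary.
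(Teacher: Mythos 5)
Your proposal matches the paper's own argument exactly: the paper deduces Theorem \ref{pretzelsL-spaces} by observing that $L_{n,\pretzvar} = L(-\pretzvar,\ldots,-\pretzvar)$ is TQA by Theorem \ref{LnTQA}, applying the Scaduto--Stoffregen result to conclude $\Sigma_2(L_{n,\pretzvar})$ is an L-space, and invoking Proposition \ref{proplinkL_n}. Your assembly of these ingredients is correct and complete.
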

\addtocounter{theorem}{-1}
\endgroup

\section{Quasipositivity and sliceness of the knots $P(3,-3,-2\pretzvar-1)$}\label{Quasipositivity}

In this section we show that the knots $P(3,-3,-2\pretzvar -1)$ are quasipositive. The argument used also shows the well-known result that these pretzels knots are slice. Let $B_n$ denote the Artin braid group on $n$ strands and let $\sigma_i$ be the standard Artin generators for $1\leq i\leq n$.

We now recall the definition of a quasipositive link. A braid $\beta\in B_n$ is called quasipositive if it can be written as a product of conjugates of the Artin generators: $\beta=\prod_k w_k\sigma_{i_k}w_k^{-1}$. A link in $L$ in $S^3$ is quasipositive if it is the closure of a  quasipositive braid.

Quasipositivity is useful in determining properties of the branched cyclic covers of the knots in $S^3$ as the following theorem shows.

\begin{theorem}[\cite{BBG-QPL-space}]
Suppose $K$ is a non-slice quasipositive knot. Then there is an $N=N(K)$ so that $\Sigma_n(K)$ is not an L-space for all $n\geq N$.
\end{theorem}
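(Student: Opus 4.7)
The plan is to exhibit, for $n$ sufficiently large, a symplectic filling of $\Sigma_n(K)$ with $b_2^+ > 0$, and then invoke an Ozsv\'ath-Szab\'o Heegaard Floer obstruction ruling out such fillings for L-spaces. The starting input is Rudolph's theorem: a quasipositive knot $K$ bounds a quasipositive surface $F \subset B^4$ which can be taken to be smoothly symplectic with respect to the standard symplectic structure on $B^4$, and with $g(F) = g_4(K)$. The non-sliceness hypothesis guarantees $g(F) > 0$. Forming the $n$-fold cyclic branched cover $W_n$ of $B^4$ over $F$ produces a smooth $4$-manifold with $\partial W_n = \Sigma_n(K)$; because the branching locus is symplectic, $W_n$ inherits a symplectic structure via the standard local model for cyclic branched covers along symplectic submanifolds (cf. Auroux, Gompf--Stipsicz).

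Next I would estimate $b_2^+(W_n)$. An Euler-characteristic computation yields $b_2(W_n) = 2 g_4(K)(n-1)$, while the Casson--Gordon--Viro formula gives
\[
\sigma(W_n) = \sum_{k=1}^{n-1} \sigma_{\omega^k}(K), \qquad \omega = e^{2\pi i/n},
\]
expressing the signature of $W_n$ as a Riemann sum of Tristram--Levine signatures of $K$. As $n \to \infty$, $\tfrac{1}{n}\sigma(W_n)$ converges to $\int_0^1 \sigma_{e^{2\pi i t}}(K)\,dt$, whose absolute value is strictly less than $2 g_4(K)$ because the Tristram--Levine signature vanishes at $\omega = 1$ and is integer-valued. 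Consequently $b_2^+(W_n) = \tfrac{1}{2}(b_2(W_n) + \sigma(W_n))$ grows linearly in $n$, and in particular is strictly positive for all $n \geq N$ for some $N = N(K)$.

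The final step applies the obstruction. Ozsv\'ath--Szab\'o showed that an L-space cannot bound a symplectic $4$-manifold with $b_2^+ > 0$: Taubes' non-vanishing of Seiberg--Witten invariants for symplectic $4$-manifolds translates, via the Heegaard Floer/Seiberg--Witten correspondence, into non-trivial mixed invariants factoring through a non-trivial part of $HF^+(\partial W)$, which is incompatible with the simple module structure of $HF^+$ on an L-space. Combined with the previous step this shows $\Sigma_n(K)$ is not an L-space for $n \geq N$.

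The hard part is controlling $b_2^+(W_n)$ rigorously. One must exclude the degenerate possibility that each $\sigma_{\omega^k}(K)$ saturates $-2g_4(K)$ (handled by the continuity argument sketched above), and one must also cope with the potential failure of $W_n$ to be simply connected, which can affect the Heegaard Floer invariants in play. Both are addressable either by a careful analysis of the Mayer--Vietoris sequence for $W_n$ or by stabilizing $W_n$ with a suitable cobordism to reach the simply-connected setting where Taubes' theorem applies cleanly.
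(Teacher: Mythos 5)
This theorem is not proved in the paper: it is imported with a citation to Boileau--Boyer--Gordon \cite{BBG-QPL-space}, so there is no internal argument to compare against. Your proposal is essentially a correct reconstruction of the argument of that source: realize the quasipositive surface as a piece of complex curve in $B^4$ of genus $g_4(K)>0$ (Rudolph, Kronheimer--Mrowka), pass to the $n$-fold cyclic branched cover $W_n$, which is a symplectic (indeed Stein) filling of $\Sigma_n(K)$, compute $b_2(W_n)=2g_4(K)(n-1)$ and $\sigma(W_n)=\sum_{j=1}^{n-1}\sigma_{\omega^j}(K)$ with $\omega=e^{2\pi i/n}$, and invoke Ozsv\'ath--Szab\'o's theorem that symplectic fillings of L-spaces have $b_2^+=0$. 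Three small points of precision. First, that obstruction applies to symplectic \emph{fillings} (boundary of contact type), not to arbitrary symplectic $4$-manifolds bounded by $\Sigma_n(K)$; this is fine here because $W_n$ is a branched cover of the Stein ball along a piece of complex curve, but it should be stated. Second, $b_2^+=\tfrac12(b_2+\sigma)$ presupposes $b_2^0(W_n)=0$, which holds when $\Sigma_n(K)$ is a rational homology sphere --- and if it is not, then it is not an L-space for trivial reasons. Third, the Riemann-sum/integral estimate is more than you need: since $\Delta_K(1)=\pm 1$, the signature function vanishes on a neighbourhood of $\omega=1$, so for $n$ large the single term $j=1$ already gives $b_2^+(W_n)\ge \tfrac12\bigl(2g_4(K)+\sigma_{e^{2\pi i/n}}(K)\bigr)=g_4(K)>0$, using the termwise bound $|\sigma_{\omega^j}(K)|\le 2g_4(K)$. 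The worry about simple connectivity in your final paragraph is unnecessary, since the filling obstruction does not require $\pi_1(W_n)=1$.
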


The following proposition shows that the pretzel knots which were shown in Section \ref{OurLinksTQA} to have all L-space branched covers are also quasipositive. Note the knots in the proposition are slice; see Figure \ref{PretzelsQP} for a ribbon diagram. This answers Question \ref{BBGQ}.

\begingroup
\def\thetheorem{\ref{pretzelsqp}}
\begin{proposition}
The knots $P(3,-3,-2\pretzvar-1)$ are slice and quasipositive for $\pretzvar\geq 1$.
\end{proposition}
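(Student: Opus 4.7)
The plan is to handle sliceness and quasipositivity separately, and in both cases to exhibit an explicit geometric structure on the standard pretzel diagram.

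For sliceness, the idea is to produce a ribbon disk directly. Any pretzel knot of the form $P(p,-p,q)$ with $q$ odd is classically known to be ribbon: performing a single band move which cancels the $+p$ and $-p$ tassels reduces the diagram to an unknot, and reversing this move produces an immersed disk in $S^3$ with only ribbon singularities. Specialising to $K_\pretzvar = P(3,-3,-2\pretzvar-1)$ gives the ribbon diagram in Figure \ref{PretzelsQP}. The only check required is that the band in question can indeed be drawn as a planar band disjoint from the rest of the diagram, which is immediate from the cancelling structure of the first two tassels.

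For quasipositivity, I would invoke Baader's notion of a \emph{track knot} \cite{Ba-Track}, together with his theorem that every track knot is the closure of a quasipositive braid. The strategy is therefore to exhibit, for each $\pretzvar \geq 1$, a track whose associated knot is $K_\pretzvar$. After recalling Baader's definition of a track and of the associated band presentation of its knot, I would draw a track whose bands correspond in a natural way to the three tassels of the pretzel, and verify by a planar isotopy that the knot obtained from Baader's construction is $P(3,-3,-2\pretzvar-1)$. Baader's theorem then immediately produces a quasipositive braid word whose closure is $K_\pretzvar$, completing the proof.

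The main obstacle is displaying the track and identifying its associated knot with $K_\pretzvar$; the rest of the argument is essentially bookkeeping. The hypothesis $\pretzvar \geq 1$ enters precisely at this step, since it guarantees that the third tassel has the correct parity and is long enough to accommodate the pattern of positive bands required by Baader's construction, and in braid-theoretic terms it corresponds to the positivity of the resulting band word. For $\pretzvar \leq 0$ either the parity fails or the track construction degenerates, consistent with the fact that the proposition is only stated for $\pretzvar \geq 1$.
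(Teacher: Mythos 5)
Your proposal follows essentially the same route as the paper: sliceness via an explicit ribbon band cancelling the $3$ and $-3$ tassels (the paper's Figure \ref{PretzelsQP}), and quasipositivity by exhibiting $K_\pretzvar$ as one of Baader's track knots and invoking his theorem that track knots are quasipositive. One small correction: the role of the hypothesis $\pretzvar \ge 1$ is not a parity issue (the tassel $-2\pretzvar-1$ is odd for every integer $\pretzvar$, so $P(3,-3,-2\pretzvar-1)$ is always a knot); rather, it ensures that after the isotopy the residual twisting consists of $\pretzvar$ \emph{positive} full twists, which is what the track-knot construction requires at the marked points $p_i$.
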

\addtocounter{theorem}{-1}
\endgroup

In order to prove this, we use the notion of a track knot defined by Baader \cite{Ba-Track}.

\begin{figure}[h!]
\begin{tikzpicture}
	\node[anchor=south west,inner sep=0] (image) at (0,0) {\includegraphics[width=0.6\textwidth]{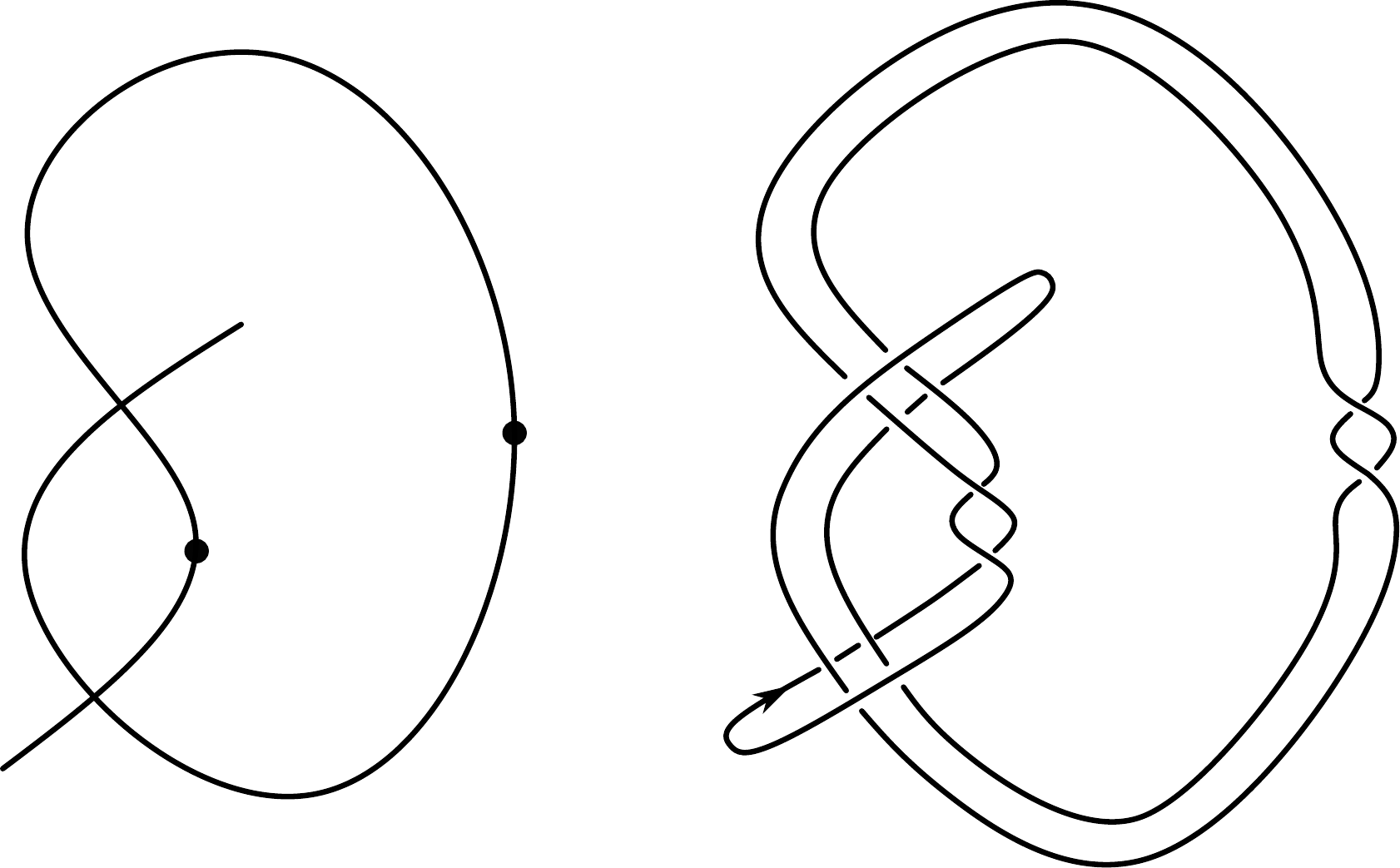} };
	\begin{scope}[x={(image.south east)},y={(image.north west)}]
        \node at (-.025,.54) { $(b,3\pi/2)$};
        \node at (-.01, 0.21){$(b,\pi)$};
    
    \end{scope}
\end{tikzpicture}
\caption{An immersed labeled interval and the corresponding track knot, which happens to be isotopic to $P(3,-3,-3)$.}
\label{trackexample}

\end{figure}

\begin{definition}
Let $C$ be the image of a generic immersion $i:[0,1]\to \mathbb{R}^2$ with a labeling at each double point by a letter in  $\{a,b,c,d\}$, and an angle in $\{0,\pi/2,\pi, 3\pi/2\}$; here we assume the diagram has been isotoped so that double points locally look like $\ex$ in the plane (not some arbitrary rotation of this picture). Finally, specify points $p_1,p_2,\ldots, p_r$ on the connected components of $C-\{\mbox{double points}\}$ such that each connected component of $C-\{p_1,p_2,\ldots,p_r\}$ is contractible. We will call $C$ a \textit{labeled immersed interval}.
\end{definition}

\begin{figure}[!htbp]
\begin{tikzpicture}
	\node[anchor=south west,inner sep=0] (image) at (0,0) {\includegraphics[width=0.9\textwidth]{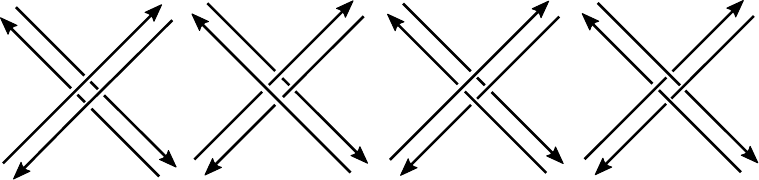}};
	\begin{scope}[x={(image.south east)},y={(image.north west)}]
        \node at (0.12,0) {\large $(a,0)$};
        \node at (0.633, 0){\large $(c,0)$};
        \node at (0.37, 0){\large $(b,0)$};
        \node at (0.88, 0){\large $(d,0)$};
    \end{scope}
\end{tikzpicture}
\caption{The process defining a knot from a labeled immersed curve replaces a double point labeled $(a,0), (b,0),(c,0)$ and $(d,0)$ with the corresponding crossing patterns above. If the angle at the crossing is not $0$, rotate the corresponding diagram above by the angle specified in the counter-clockwise direction.}
\label{trackcrossings}
\end{figure}

From such an interval one can associate a knot as follows (see Figure \ref{trackexample}  for an example). Draw an immersed interval parallel to $C$ and join the two intervals by an arc at each end of $C$, this forms an immersed band following $C$ which we will call $B_C$.  Orient $\partial B_C$ counter-clockwise. Each double point of $C$  corresponds to four self-intersections of $\partial B_C$, which we replace with over-and under-crossings according to the labeling and angle of $C$ at that double point as shown in Figure \ref{trackcrossings}. Replace each point $p_i$ with a full-twist oriented to introduce positive crossings.

\begin{definition}
A knot obtained from a labeled immersed interval by the above procedure is called a \textit{track knot}.
\end{definition}

\begin{figure}[h!]
\begin{tikzpicture}
	\node[inner sep=0] (image1) at (0,0) {\includegraphics[width=0.7\textwidth]{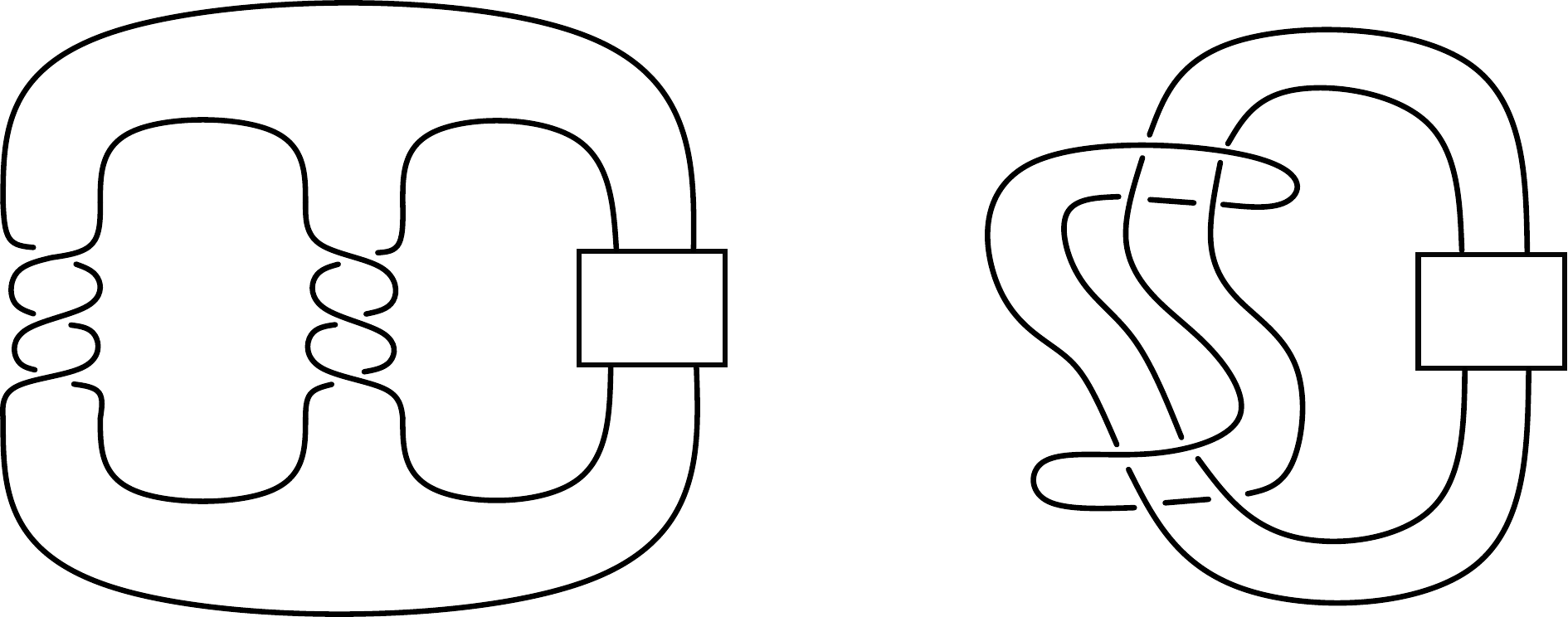}};
	\node[inner sep=0] (image2) at (0,-5) {\includegraphics[width=0.65\textwidth]{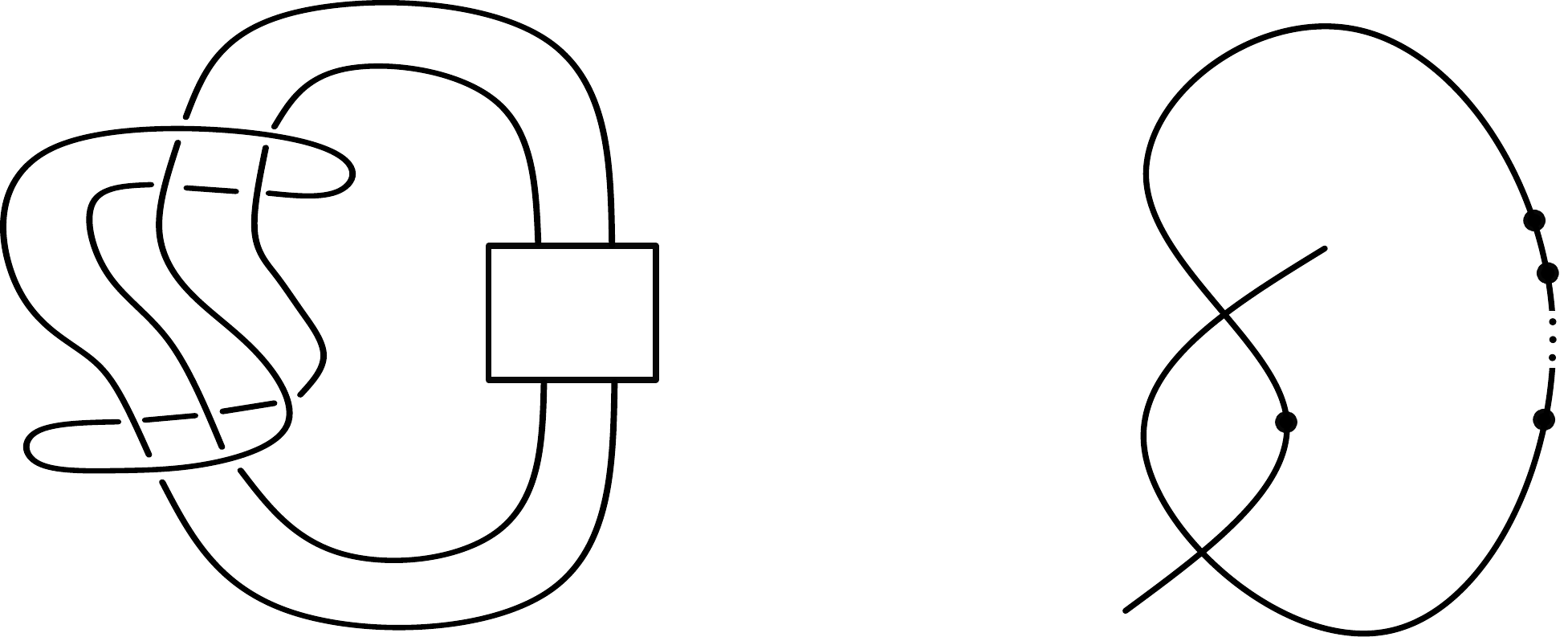}};
		
	\begin{scope}[x={($(image1.east)-(image1)$)},y={($(image1.south)-(image1)$)},shift=(image1)]
	\node at (-.166,0) [rotate=0]{\tiny $-2\pretzvar{-}1$};
	\node at (.905,0) [rotate=0]{\tiny $-2\pretzvar{-}1$};
	\end{scope}
	
	\begin{scope}[x={($(image2.east)-(image2)$)},y={($(image2.north)-(image2)$)},shift=(image2)]
	\node at (-.27,0) [rotate=0]{\tiny $-2\pretzvar{-}1$};
	\node at (1.05,0) {\resizebox{0.15in}{0.43in}{\large $\}$}};
	\node at (1.11,0) {\small $\pretzvar$};
	 \end{scope}
\end{tikzpicture}
\caption{An isotopy of the pretzel knot $P(3,-3,-2\pretzvar -1)$ which in particular exhibits ribbon disks for each knot. After redistributing a half twist from the twist box in the final pictured knot diagram by isotopy, we obtain the bottom right picture which gives a labeled immersed interval for $P(3,-3,-2\pretzvar -1)$ realizing these knots as track knots.}
\label{PretzelsQP}
\end{figure}

\begin{theorem}[\cite{Ba-Track}]
\label{trackqp}
Track knots are quasipositive.
\end{theorem}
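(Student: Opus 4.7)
The plan is to appeal to Rudolph's characterization: a link $L \subset S^3$ is quasipositive if and only if it bounds a \emph{quasipositive surface}, namely a surface obtained from a collection of parallel disks by attaching positive Hopf bands, each realizing a single conjugate $w\sigma_i w^{-1}$ of a positive Artin generator. The goal is to exhibit the track knot $K$ associated with a labeled immersed interval $C$ as the boundary of such a surface.

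The natural starting point is the immersed band $B_C$ itself. Since each component of $C\setminus\{p_1,\ldots,p_r\}$ is contractible, $B_C$ decomposes into $r{+}1$ rectangular disks joined together by bands located at the double points of $C$ and at the marked points $p_i$. I would position the rectangular pieces as parallel sheets of a braided surface (thickenings of disjoint intervals in parallel planes) and try to realize every joining band as a positive Hopf band between two of the sheets. Orienting $\partial B_C$ counter-clockwise as in the definition of a track knot fixes an orientation of the would-be braided surface, against which positivity of bands is measured.

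The marked points $p_i$ are immediate: a full positive twist on two parallel strands of $\partial B_C$ is by definition a positive Hopf band, giving $r$ positive bands. The substance of the argument is the analysis at each double point, where the four self-intersections of $\partial B_C$ become four crossings whose pattern is determined by the label $\ell\in\{a,b,c,d\}$ and angle $\theta\in\{0,\pi/2,\pi,3\pi/2\}$, as in Figure \ref{trackcrossings}. Rotation by $\theta$ merely permutes the roles of the strands and preserves positivity of bands, so the verification reduces to the four cases $(a,0),(b,0),(c,0),(d,0)$. For each of these I would exhibit a local isotopy identifying the corresponding four-crossing pattern with the boundary of a single positive Hopf band attached between two specific sheets of the braided surface, with any remaining crossings absorbed into the braiding of the ambient sheets themselves (so that they contribute to the underlying braid geometry rather than to a band).

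The main obstacle is precisely this local case-by-case verification of the four labels $a,b,c,d$, which is the heart of Baader's construction and the reason these four patterns are singled out in the definition of a track knot: they are exactly the local models that arise as the boundary of a positive Hopf band between parallel braided sheets, while all other combinations of over/under data would force at least one negative band and obstruct quasipositivity. Once the four models are verified, the contractibility of the components of $C\setminus\{p_1,\ldots,p_r\}$ ensures that the local pictures glue coherently to an embedded quasipositive surface $F$ with $\partial F = K$, and Rudolph's characterization then delivers quasipositivity of $K$.
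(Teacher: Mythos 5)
The paper does not prove this statement: Theorem \ref{trackqp} is imported verbatim from Baader \cite{Ba-Track}, and the only internal commentary is the remark that Baader's proof is an algorithm producing an explicit quasipositive braid word for each track knot. So there is no argument in the paper to compare yours against; your proposal has to stand on its own, and as written it does not. You correctly identify that the entire content of the theorem lives in the four local models $(a,0),(b,0),(c,0),(d,0)$ of Figure \ref{trackcrossings}, and then you defer exactly that verification (``the main obstacle,'' ``the heart of Baader's construction''). A proof outline whose central step is ``exhibit a local isotopy identifying each pattern with a positive Hopf band'' without exhibiting any of the four isotopies proves nothing; this is the gap.

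There are also two places where the deferred step is harder than your sketch suggests. First, invoking Rudolph's characterization requires a \emph{global} braided surface: parallel disks with bands each realizing a conjugate $w\sigma_i w^{-1}$. Showing that each double point locally looks like a positive Hopf band is not enough; you must simultaneously put all the sheets coming from the arcs of $C$ into braided position and check that the bands at the double points and at the points $p_i$ are positive \emph{with respect to that global braiding}. This is precisely where the counter-clockwise orientation of $\partial B_C$ and the contractibility of the components of $C\setminus\{p_1,\ldots,p_r\}$ do real work, and your one-sentence appeal to them does not engage with it. Second, your claim that rotation by $\theta\in\{\pi/2,3\pi/2\}$ ``merely permutes the roles of the strands and preserves positivity'' is not automatic: the crossing pattern rotates with the local picture while the braiding direction of the ambient sheets does not, so positivity of the resulting band must be rechecked for each angle. (Your parenthetical assertion that any other over/under data would ``force at least one negative band and obstruct quasipositivity'' is also unjustified and not needed --- a negative band in one particular surface does not by itself obstruct quasipositivity of the boundary.) To make this rigorous you would either have to carry out the four-plus-rotations case analysis, or do what Baader actually does and read off a quasipositive band word directly from the labeled immersed interval.
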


\begin{remark} Baader's proof that track knots are quasipositive describes an algorithm to obtain a quasipositive braid word for a track knot, though in general it will not be of minimal braid index.
\end{remark}

 Figure \ref{PretzelsQP} shows that the knots $P(3,-3,-2\pretzvar -1)$ are track knots and exhibits a slice disk for them. We then obtain the following proposition.

\begin{proposition}
The pretzel knots $P(3,-3,-2\pretzvar-1)$ are slice track knots for $\pretzvar\geq 1$.
\end{proposition}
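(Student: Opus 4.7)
The plan is to prove the two claims together by producing a single sequence of isotopies that starts from the standard three-strand pretzel diagram of $P(3,-3,-2\pretzvar-1)$ and ends at a presentation simultaneously revealing a ribbon disk and a labeled immersed interval realizing the knot as a track knot, as in Figure \ref{PretzelsQP}.

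For sliceness, I would exploit the fact that the $+3$ and $-3$ tangles are mirror images and can be brought into a configuration forming a visible band. Concretely, a planar isotopy re-routes the two opposite-sign tangles so that a single ribbon band becomes visible; the $-2\pretzvar-1$ box on the third strand lives inside this band and adds internal twists but does not interfere with the ribbon move. Cutting the band along a saddle arc yields a two-component unlink, producing the desired ribbon disk. This is the standard ribbon construction for pretzel knots of the form $P(n, -n, \cdot)$ and just needs to be made explicit in the diagram.

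For the track-knot structure, I would take the diagram produced above, identify the core arc of the visible band as an immersed interval $C \subset \mathbb{R}^2$, and verify that the band $B_C$ (built from two parallel copies of $C$ joined at the endpoints) reproduces the knot diagram up to the local crossing patterns at the double points and twist insertions. At each double point of $C$ I would read off the correct label in $\{a,b,c,d\}\times\{0,\pi/2,\pi,3\pi/2\}$ from Figure \ref{trackcrossings} to match the observed over/under data. To handle the $-2\pretzvar-1$ twist region, I would redistribute one half-twist into an adjacent crossing region (this is precisely the isotopy mentioned in the caption of Figure \ref{PretzelsQP}), so that the remaining twists package cleanly as $\pretzvar$ full twists on an arc of $C$ away from the double points. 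These become the marked points $p_1,\ldots,p_\pretzvar$, and contractibility of each component of $C - \{p_i\}$ is then visible directly from the picture.

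The proposition then follows: sliceness from the explicit ribbon disk, and the track-knot property from the labeled immersed interval just constructed. The only genuinely subtle point --- and the main book-keeping obstacle --- is matching signs: the counter-clockwise orientation of $\partial B_C$, the local labels at the double points, and the positively oriented full twists at the marked points must together reproduce the signs of all crossings in $P(3,-3,-2\pretzvar-1)$. Once this correspondence is checked in the base case $\pretzvar=1$ (where the knot is $P(3,-3,-3)$ and the track-knot realization is essentially the one pictured in Figure \ref{trackexample}), the pattern extends to arbitrary $\pretzvar \ge 1$ by inserting additional positive full-twist marked points on the same arc.
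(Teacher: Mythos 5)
Your proposal follows essentially the same route as the paper, whose proof is precisely the pictorial isotopy of Figure \ref{PretzelsQP}: an isotopy of the standard pretzel diagram exhibiting a ribbon disk, followed by redistributing a half-twist from the $-2\pretzvar-1$ box so that the diagram is the boundary of an immersed band over a labeled immersed interval with the twists absorbed as marked points. The steps you outline (including the sign bookkeeping and the $\pretzvar=1$ comparison with Figure \ref{trackexample}) are exactly the content the paper encodes in that figure, so your argument is correct and not a different approach.
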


Proposition \ref{pretzelsqp} now follows from Theorem \ref{trackqp}.

\section{Branched cyclic covers of two-bridge links}\label{TwoBridge}
In this section we prove Theorem \ref{twobridgebranched} giving a family of two bridge links, all of whose cyclic branched covers are L-spaces.

We first briefly recall some background on two bridge links. The two bridge link associated with the fraction $\frac{p}{q} \in \mathbb{Q}$, where $p > q > 0$ are coprime is the unique link $L_{p/q}$ in $S^3$ with double branched cover the lens space $L(p,q)$. The links $L_{p/q}$ and $L_{p/(p-q)}$ are mirror images, and hence up to taking mirror images we may assume that precisely one of $p$ or $q$ is even. Then, following for example \cite{Ka-Survey}, one can write $p/q$ as a (positive) even continued fraction expansion
$$\frac{p}{q} = [2a_1, 2a_2, \ldots, 2a_n]^{+} := 2a_1 + \cfrac{1}{2a_2 + \cfrac{1}{\ddots \,\,  + \cfrac{1}{2a_n}}},$$

where $a_1,\ldots,a_n$ are integers. Then $L_{p/q}$ is the link shown in Figure \ref{twobridgestd}. If $n$ is even, $L_{p/q}$ is a knot, otherwise it is a two component link.

\begin{figure}[h!]
\begin{tikzpicture}
	\node[anchor=south west,inner sep=0] (image) at (0,0) {\includegraphics[width=0.7\textwidth]{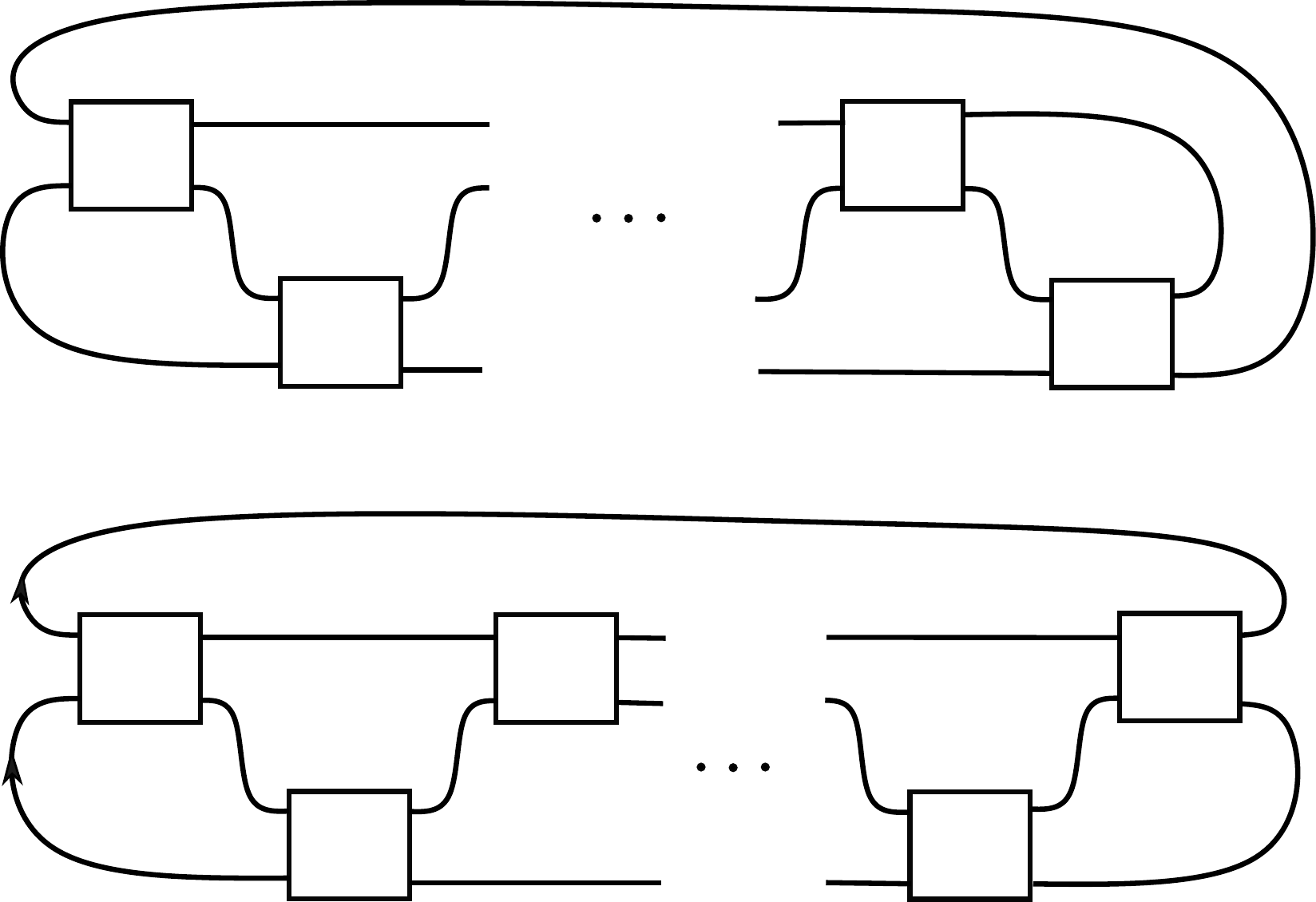}};
	\begin{scope}[x={(image.south east)},y={(image.north west)}]
        \node at (0.1,0.826) { \tiny $2a_1$};
        \node at (0.26, 0.63){\tiny $-2a_2$};
        \node at (0.685, 0.826){\tiny $2a_{n-1}$};
        \node at (0.84, 0.63){\tiny  $-2a_{n}$};
        
   \node at (0.11,0.256) { \tiny $2a_1$};
   \node at (0.425, 0.256){\tiny $2a_3$};
        \node at (0.265, 0.065){\tiny $-2a_2$};
        \node at (0.736, 0.065){\tiny $-2a_{n\!-\!1}$};
        \node at (0.9, 0.256){\tiny  $2a_{n}$};
    \end{scope}
\end{tikzpicture}
\caption{The top picture depicts a diagram for the two-bridge knot $K$ with fraction $[2a_1,\ldots, 2a_n]^{+}$ where $n$ is even. On bottom we have a diagram for the link $L$ with fraction $[2a_1,\ldots, 2a_n]^{+}$ where $n$ is odd, with a preferred orientation. In both diagrams the boxes should be replaced with the corresponding number of signed half-twists.}
\label{twobridgestd}
\end{figure}

We are now ready to state a theorem which classifies all examples known to the authors of two-bridge links, all of whose branched covers are L-spaces. The statement restricted to two bridge knots with fraction $[2a_1,2a_2]^{+}$, and links with fraction $[2a_1]^{+}$ is due to Peters \cite{Pe09}. The case for two-bridge knots with fraction of the form $[2a_1,2a_2,\ldots, 2a_n]^{+}$ was shown by Teragaito \cite{Te14}. Again, two bridge knots are known to be $2$-periodic with quotient the unknot; this implies that their $n$-fold cyclic cover is the $2$-fold cover of a link $L_n$. This line of reasoning is known to experts; see e.g. \cite{MV-Cyclic2-bridge}. We observe that for a certain family of 2-bridge links, $L_n$ is non-split and alternating and conclude that $\Sigma_2(L_n)$ is an L-space.

\begingroup
\def\thetheorem{\ref{twobridgebranched}}
\begin{theorem}
Let $L$ be the two bridge link with fraction $[2a_1,2a_2,\ldots, 2a_n]^{+}$ where $a_i>0$ for all $i=1,\ldots, n$. In the case that $L$ is a two-component link we consider the link oriented as in Figure \ref{twobridgestd}. Then $\Sigma_n(L)$ is an L-space for all $n\geq 2$.
\end{theorem}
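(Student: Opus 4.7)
The plan is to follow the same two-step strategy outlined in the introduction and used for the pretzel examples: realize $\Sigma_n(L)$ as the double branched cover of a link $L_n$ via a $2$-periodic symmetry, and then show that in the restricted case $[2a_1,\ldots,2a_n]^{+}$ with all $a_i>0$ the link $L_n$ admits a non-split alternating diagram. The conclusion then follows from the Ozsv{\'a}th--Szab{\'o} result that the double branched cover of a non-split alternating link is an L-space.

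First I would exhibit an involution $\iota$ on $(S^3,L)$ whose fixed axis $a$ is disjoint from $L$. Using the standard diagram in Figure \ref{twobridgestd}, rotation by $\pi$ about the horizontal axis running through the centres of every even twist box preserves $L$ (with the chosen orientation in the two-component case) and has fixed set an unknotted circle $a$. The quotient $\overline{L}\cup \overline{a}\subset S^3$ is a two-component link with $\overline{L}$ an unknot, exactly as in the pretzel case of Section \ref{PretzelCovers}. Applying the commutative diagram of Figure \ref{branchedtwoways} with this symmetry yields $\Sigma_n(L)\cong \Sigma_2(L_n)$, where $L_n$ is the preimage of $\overline{a}$ in the $n$-fold cyclic cover of $S^3$ branched over $\overline{L}$.

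Next I would obtain an explicit diagram for $L_n$ by the cut-and-stack procedure used in the proof of Proposition \ref{proplinkL_n}: cut the solid torus $S^3\setminus \nu(\overline{L})$ along a meridional disk to get a cylinder containing a tangle representing $\overline{a}$, stack $n$ copies of that cylinder, and reglue the ends by the identity. Because the quotient sits inside a solid torus with $\overline{L}$ as its meridian, the tangle picture for $\overline{a}$ in the complement of $\overline{L}$ can be read directly off the bottom diagram of Figure \ref{twobridgestd}, giving a concrete diagram $D_n$ for $L_n$ whose crossings come in groups of $n$ copies of the original twist boxes, linked together by the lifts of the strands of $\overline{a}$.

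The main obstacle, and the heart of the proof, is checking that the resulting diagram $D_n$ is a non-split alternating diagram when every $a_i$ is strictly positive. Here the positivity hypothesis is essential: all twist boxes in the original picture carry the same sign pattern, so the alternating condition at each crossing of a single lifted twist box becomes automatic, and the matching crossings between adjacent copies in the stack can be arranged to alternate with them by a suitable planar isotopy of the $\overline{a}$-tangle before stacking. I would handle this in two steps: (a) redraw the stacked diagram so that the strands of $L_n$ travel consistently across each copy of the cylinder; (b) verify alternation locally at each twist box and at each gluing crossing by induction on $n$, using the parity/sign compatibility forced by $a_i>0$. Non-splitness follows from Menasco's theorem once alternation is established, since $L_n$ visibly has a connected diagram (the axis $\overline{a}$ is connected and its lifts are joined through every twist region). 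Once $D_n$ is a non-split alternating diagram, \cite{OS-DBcover} gives that $\Sigma_2(L_n)\cong \Sigma_n(L)$ is an L-space, completing the proof.
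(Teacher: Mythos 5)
Your high-level strategy is the paper's: exhibit $L$ as $2$-periodic with unknotted quotient, use the commutative diagram of Figure \ref{branchedtwoways} to get $\Sigma_n(L)\cong\Sigma_2(L_n)$, and show $L_n$ is non-split alternating. However, there are two genuine gaps at exactly the points where the real work lies. First, the symmetry you assert does not exist as stated: in Figure \ref{twobridgestd} the odd-indexed twist boxes sit in a different row from the even-indexed ones, so rotation by $\pi$ about a horizontal line through the centres of the even boxes cannot preserve the diagram --- it would have to carry each odd box to a twist region on the other side of that line, and there is nothing there. The paper has to first perform a nontrivial isotopy (a sequence of Conway-sphere twists, Figure \ref{twobridgesymmetric}) to produce a diagram that is symmetric, and the involution used is rotation by $\pi$ about the axis \emph{perpendicular} to the plane through the centre of that diagram, not an in-plane axis. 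You cannot simply point at the standard diagram and declare a periodic symmetry; producing and verifying a genuinely symmetric diagram is a substantive step.

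Second, your claim that ``the tangle picture for $\overline{a}$ in the complement of $\overline{L}$ can be read directly off the bottom diagram of Figure \ref{twobridgestd}'' skips the other essential step. After taking the quotient, it is the axis image $\overline{a}$ that sits in standard round position, while $\overline{L}$ (the quotient of the link) is the complicated component; to lift $\overline{a}$ through the $n$-fold branched cover of $\overline{L}$ you must first reposition the pair so that $\overline{L}$ is a standard unknot and $\overline{a}$ is an explicit pattern in the complementary solid torus. The paper does this by checking that $\overline{L}$ is unknotted via Reidemeister~1 moves (here is where $a_i>0$ enters) and then invoking an exchange symmetry of $\overline{L}\cup\overline{a}$ (Figure \ref{twobridgequotient}) to swap the roles of the two components; only then is $\overline{a}$ visibly an alternating pattern in the solid torus, whence its lift $L_n$ is alternating with no induction needed. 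Your proposed ``induction on $n$ verifying alternation locally'' is not wrong in spirit, but without an explicit solid-torus pattern for $\overline{a}$ there is nothing concrete to induct on. The final appeals to Menasco and to \cite{OS-DBcover} are fine and match the paper.
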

\addtocounter{theorem}{-1}
\endgroup

\begin{proof}
Figure \ref{twobridgesymmetric} shows a diagram for the link $L$, where $n$ is odd, which is rotationally symmetric about the origin (thinking of the diagram on the plane). An analogous process yields a symmetric diagram for the two bridge knot with fraction $[2a_1,\ldots, 2a_n]^+$ where $n$ is even.

\begin{figure}[h!]
\begin{tikzpicture}
	\node[anchor=south west,inner sep=0] (image) at (0,0) {\includegraphics[width=0.7\textwidth]{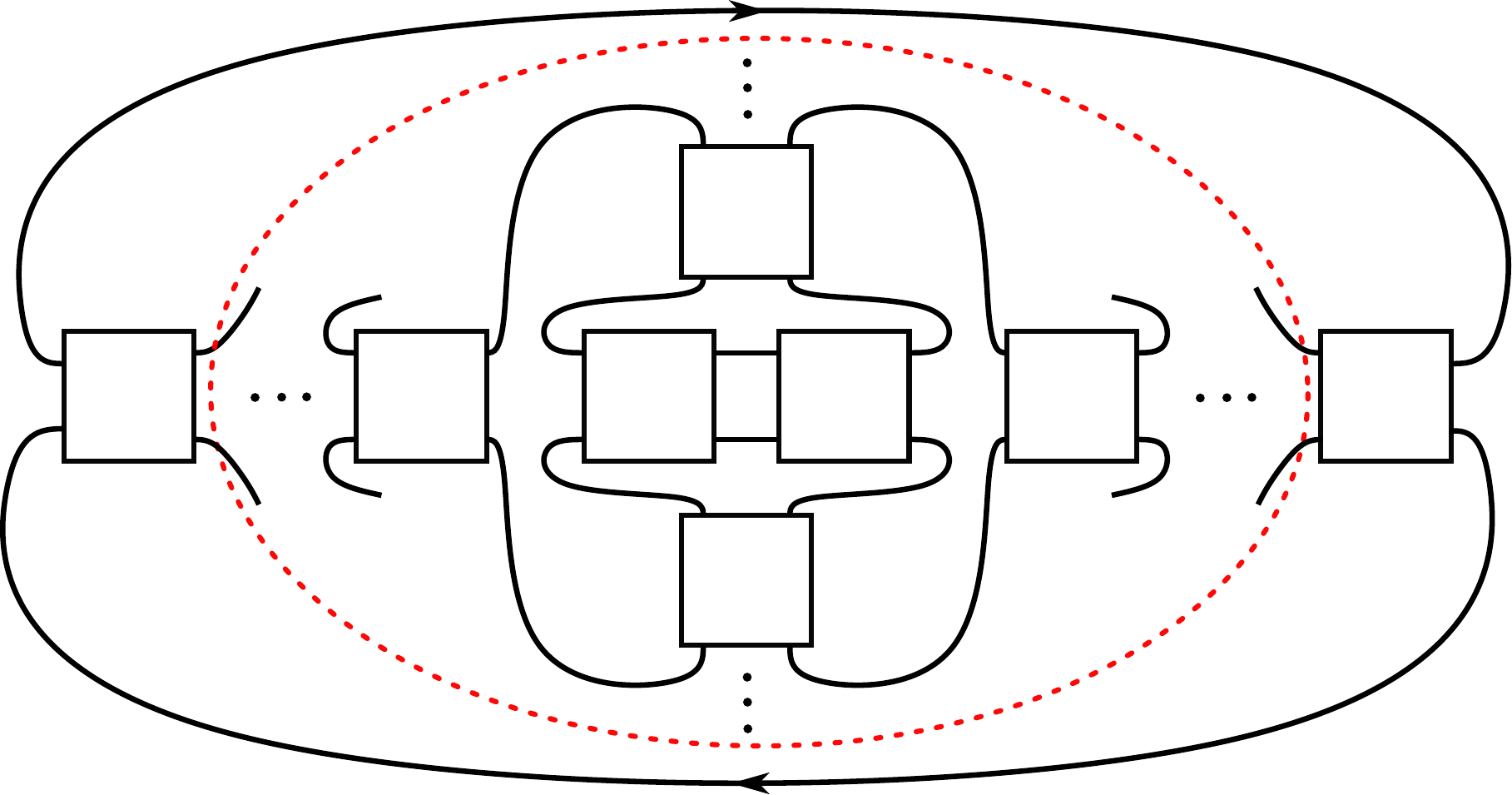}};
	\begin{scope}[x={(image.south east)},y={(image.north west)}]
        \node at (0.43,0.5) {  $a_1$};
        \node at (0.56,0.5) {  $a_1$};
        \node at (0.495,0.74) {  $-a_2$};
        \node at (0.495,0.26) {  $-a_2$};
        \node at (0.71, 0.5){ $a_3$};
        \node at (0.28, 0.5){ $a_3$};
         \node at (0.085, 0.5){ $a_n$};
        \node at (0.915, 0.5){ $a_n$};
        
    \end{scope}
\end{tikzpicture}
\caption{A symmetric diagram for the two bridge links with fraction $[2a_1,\ldots, 2a_n]^{+}$ where $n$ is odd. Imagining the diagram is placed on the plane, along the $x$-axis the twist boxes with coefficient $a_j$ for $j$ odd are placed at $(\pm j,0)$. Along the $y$-axis the twist boxes with coefficient $-a_j$ for $j$ even are placed at $(0,\pm j)$. The boxes are connected so that for each box indexed with $a_j$ is connected to both boxes indexed with $a_j+1$ in the manner depicted. The dotted circle indicates a Conway sphere that, when rotated through angle $\pi$, gives an isotopic link but transfers a half-twist from one box labelled $a_n$ to the other.}
\label{twobridgesymmetric}
\end{figure}

We sketch a proof that the diagrams in Figure \ref{twobridgestd} and Figure \ref{twobridgesymmetric} are isotopic. Using the Conway sphere indicated in Figure \ref{twobridgesymmetric} all of the half-twists of the left-hand twist box labelled $a_n$ can be untwisted at the expense of adding $a_n$ half twists to the box labelled $a_n$ on the right. 

There is a corresponding Conway sphere $S_i$ for each $2\leq i\leq n$ which contains all twist boxes labelled $a_j$ for $j<i$. Twisting along each $S_i$, either about its horizontal or vertical axis depending on the parity of $i$, in decreasing order from $S_n$ to $S_2$ yields the corresponding link in Figure \ref{twobridgestd}.

\begin{figure}[h!]
\begin{tikzpicture}
	\node[anchor=south west,inner sep=0] (image) at (0,0) {\includegraphics[width=0.9\textwidth]{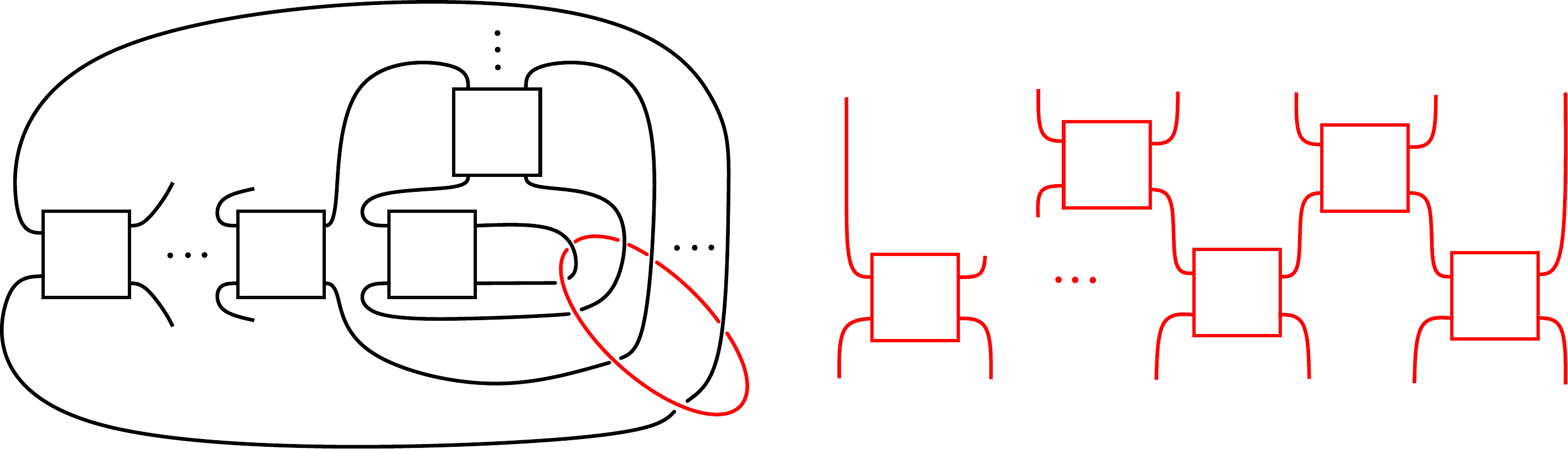}};
	\begin{scope}[x={(image.south east)},y={(image.north west)}]
        \node at (0.28, 0.43) {  $a_1$};
        \node at (0.585,0.342) {  $\textcolor{red}{a_n}$};
        \node at (0.315,0.72) {  $-a_2$};
        \node at (0.87,0.62) {  $\textcolor{red}{-a_2}$};
        \node at (0.79,0.345) {  $\textcolor{red}{a_3}$};
        \node at (0.705, 0.62){ $\textcolor{red}{-a_4}$};
        \node at (0.18, 0.43){ $a_3$};
         \node at (0.055, 0.43){ $a_n$};
        \node at (0.955, 0.342){ $\textcolor{red}{a_1}$};
         \node at (0.48,0.31) { \Large $\textcolor{red}{\overline{a}}$};
          \node at (-.01,0.7) { \Large ${\overline{L}}$};
    \end{scope}
\end{tikzpicture}
\caption{The link $\overline{L}\cup \overline{a}$ on the left. This link is symmetric; exchanging the roles of  $\overline{L}$ and $\overline{a}$, and then cutting $\overline{a}$ along the obvious disk bounded by $\overline{L}$ yields the picture on the right.}
\label{twobridgequotient}
\end{figure}

Since this diagram is symmetric about the origin, we can take the quotient of $(S^3,L\cup a)$ under the rotation about $a$ through angle $\pi$ where  $a$ is the axis in $S^3$ perpendicular to the origin of the diagram, yielding $(S^3, \overline{L}\cup\overline{a})$; see Figure \ref{twobridgequotient}. It is not hard to see that $\overline{L}$ is unknotted. If we now assume that each $a_i>0$ we also see that the quotient diagram of $\overline{L}$ is alternating and in fact the diagram can be modified by only Reidemeister 1 moves to obtain a diagram for $\overline{L}$ as the standard unknot. It follows that the link $\overline{L}\cup\overline{a}$ is symmetric, meaning that for any diagram there is an isotopy which exchanges the roles of $\overline{L}$ and $\overline{a}$, but otherwise leaves the picture unchanged.

As in the proof of Proposition \ref{proplinkL_n}, we see that we can now express $\Sigma_n(L)$ as $\Sigma_2(L_n)$ where $L_n$ is the lift of $\overline{a}$ under the $n$-fold branched cover of the unknotted $\overline{L}$. Since $\overline{a}$ has an alternating diagram as a pattern in the complement of $L$, its lift will also be alternating. Thus each $\Sigma_n(L)$ is homeomorphic to the double-branched cover of a non-split alternating link, and hence is an L-space.
\end{proof}

\bibliography{Pretzelbibliography}

\end{document}